\crefname{enumi}{}{} % Avoid showing ``item (1)''
\newtheorem{theorem}{Theorem}[section]
\newtheorem{lemma}[theorem]{Lemma}
\newtheorem{proposition}[theorem]{Proposition}
\newtheorem{corollary}[theorem]{Corollary}
\theoremstyle{remark}
\newtheorem{remark}{Remark}[section]
\newtheorem*{remark*}{Remark}
\theoremstyle{definition}
\newtheorem{definition}{Definition}[section]
\newcommand{\N}{\ensuremath{\mathbb N}} % Natural numbers
\newcommand{\Z}{\ensuremath{\mathbb Z}} % Integers
\newcommand{\Q}{\ensuremath{\mathbb Q}} % Rationals
\newcommand{\R}{\ensuremath{\mathbb R}} % Real numbers
\providecommand{\C}{}
\renewcommand{\C}{\ensuremath{\mathbb C}} % Complex numbers
\newcommand{\F}{\ensuremath{\mathbb F}} % Fields
\newcommand{\PS}{\ensuremath{\mathcal P}} % Power set
\newcommand{\defeq}{\ensuremath{\coloneqq}}% Equal in a definition.
\newcommand\M{\ensuremath{\mathcal{M}}} % Family of finite multisets
\newcommand\rk{\ensuremath{\operatorname{rk}}} % Rank
\newcommand{\FS}{\ensuremath{\operatorname{FS}}} % FS
\newcommand{\im}{\ensuremath{\operatorname{Im}\,}} % Image
\newcommand{\Hom}{\ensuremath{\operatorname{Hom}}} % Homomorphisms
\newcommand{\cyclic}[1]{\ensuremath{\Z / #1\Z}}		% Cyclic group
\newcommand{\ord}{\ensuremath{\operatorname{ord}}}		% Order
\newcommand{\emptyparam}{\ensuremath{\,\cdot\,}}% Empty parameter (a dot).
\begin{document}

\title{On The Determination of Sets By Their Subset Sums}

\author{Andrea Ciprietti}
\address{Andrea Ciprietti
\hfill\break Department of Mathematics, University of Pisa, Pisa, Italy}
\email{andreaciprietti99@gmail.com}

\author{Federico Glaudo}
\address{Federico Glaudo
\hfill\break School of Mathematics, Institute for Advanced Study, 1 Einstein Dr., Princeton NJ 05840, U.S.A.}
\email{fglaudo@ias.edu}

\begin{abstract}
    Let $A$ be a multiset with elements in an abelian group. Let $\FS(A)$ be the multiset containing the $2^{|A|}$ sums of all subsets of $A$.
    
    We study the reconstruction problem ``Given $\FS(A)$, is it possible to identify $A$?'', and we give a satisfactory answer for all abelian groups.
    We prove that, up to identifying multisets through a natural equivalence relation, the function $A \mapsto \FS(A)$ is injective (and thus the reconstruction problem is solvable) if and only if every order $n$ of a torsion element of the abelian group satisfies a certain number-theoretical property linked to the multiplicative group $(\cyclic{n})^*$.
        
    The core of the proof relies on a delicate study of the structure of cyclotomic units. Moreover, as a tool, we develop an inversion formula for a novel discrete Radon transform on finite abelian groups that might be of independent interest.
\end{abstract}

\maketitle

\section{Introduction}

Let $G$ be an abelian group and let $A=\{a_1, a_2, \dots, a_{|A|}\}$ be a finite multiset (i.e., a set with repeated elements) with elements in $G$ (see \cref{subsec:multisets} for a formal definition of multiset). Its \emph{subset sums multiset} $\FS(A)$, that is, the multiset containing the $2^{|A|}$ sums over all subsets of $A$ (taking into account multiplicities), is defined as
\begin{equation*}
    \FS(A) \defeq \Big\{\sum_{i\in I} a_i: I\subseteq \{1, 2, \dots, |A|\}\Big\}.
\end{equation*}
We study the following reconstruction question: 
\begin{center}
\textit{If one is given $\FS(A)$, is it possible to identify $A$?} 
\end{center}
As we will see, this strikingly simple question features a rich structure and its solution spans a wide range of mathematics: from the theory of cyclotomic units, to an inversion formula for a novel discrete Radon transform. 
Before going deeper into the problem, let us give some background on related results in the literature.

If, instead of $\FS(A)$, one is given the sums over all the $\binom{|A|}{s}$ subsets with fixed size equal to $s$ (e.g., if $s=2$, the sums over all pairs), the reconstruction problem has been studied in the case of a free abelian group $G=\Z^d$ \cite{SelfridgeStraus1958,GordonFraenkelStraus1962}. For pairs (i.e.\ $s=2$), the reconstruction is possible when the size of $A$ is not a power of $2$ \cite[Theorem 1 and Theorem 2]{SelfridgeStraus1958}. For $s$-subsets with $s>2$, the reconstruction is possible if the size of $A$ does not belong to a finite subset of \emph{bad} sizes \cite[Section 4]{GordonFraenkelStraus1962}. See the recent survey \cite{fomin2019} for a detailed presentation of the history of this problem.

If, instead of $\FS(A)$, one is given $A+A$ (i.e., the sum of any two elements of $A$, not necessarily distinct), the problem has been studied extensively for infinite sets of nonnegative integers (see, for example, \cite{Lev2004,ChenLev2016,Helou2017,KissSandor2019} and the survey \cite{Nathanson2008}).

It might seem that if one is only provided with the sums of $s$-subsets (i.e., subsets with size $s$) then the reconstruction is strictly harder than if one is provided the sums of all subsets. This is not true because the information is not ordered and thus, even if we have more information, it is also harder to determine which value corresponds to which subset.

Let us now go back to the reconstruction problem for $\FS$. The first important observation is the following one. Given a multiset $A$ and a subset $B\subseteq A$ whose sum equals $0$ (i.e.\ $\sum_{b\in B} b = 0$), if we flip the signs of elements of $B$ then $\FS$ does not change. So, if $A'\defeq (A\setminus B)\cup (-B)$, then $\FS(A)=\FS(A')$ (see \cref{fig:sim0-same-fs} for an explanation).

\begin{figure}[h]
    \includegraphics{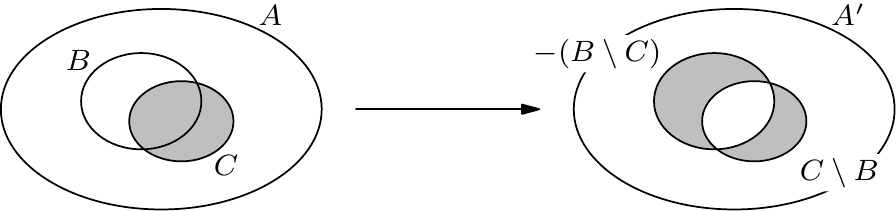}
    \caption{
        Proof by picture of $A\sim_0 A' \implies \FS(A)=\FS(A')$.
        The set $C$ in $A$ (highlighted in gray) and the set $(C\setminus B)\cup(-(B\setminus C))$ in $A'$ (highlighted in gray) have the same sum because the sum of the elements in $B$ is assumed to be $0$. Thus, we have a bijection between the subsets of $A$ and $A'$ which keeps the sum unchanged, hence $\FS(A)=\FS(A')$.
    }
    \label{fig:sim0-same-fs}
\end{figure}

Hence, if we only know $\FS(A)$, the best we can hope for is to identify the equivalence class of $A$ with respect to the following equivalence relation.
\begin{definition}
    Given two multisets $A, A'$ with elements in $G$, we say that $A\sim_0 A'$ if and only if $A'$ can be obtained from $A$ by flipping the signs of the elements of a subset of $A$ with null sum, i.e., if there exists $B\subseteq A$, with $\sum_{b\in B}b=0$, such that $A'=(A\setminus B)\cup(-B)$.
\end{definition}
We have already observed that if $A\sim_0 A'$ then $\FS(A)=\FS(A')$.
If the group is $G=\Z$, this turns out to be an ``if and only if'' (see \cref{prop:multiply-by-Z}), while if $G=\cyclic{2}$ it is not (indeed, in $\cyclic{2}$ one has $\FS(\{0, 1\})=\{0,0,1,1\}=\FS(\{1,1\})$). It is natural to consider the class of abelian groups such that the double implication holds, i.e.\ the fibers of $\FS$ coincide with the equivalence classes of $\sim_0$.
\begin{definition}
    An abelian group $G$ is $\FS$-regular if, for any two multisets $A,A'$ with elements in $G$, it holds $\FS(A)=\FS(A')$ if and only if $A\sim_0 A'$.
\end{definition}
We have already observed that $\cyclic{2}$ is not $\FS$-regular; moreover, any group containing a subgroup that is not $\FS$-regular cannot be $\FS$-regular. The next smallest non-$\FS$-regular group is elusive; in fact, it turns out that $\cyclic{n}$ is $\FS$-regular for $n=3,5,7,9,11,13,15$. But $\cyclic{17}$ is not $\FS$-regular, and then $\cyclic{n}$ is $\FS$-regular for $n=19,21,23,25,27,29$ and not $\FS$-regular for $m=31,33$. These small examples suggest that the $\FS$-regularity of $G$ may be related to the behavior of powers of two in $G$ (notice that $17, 31, 33$ are adjacent to a power of two).

Our main result is the characterization of $\FS$-regular groups. In order to state our result, we need to introduce a subset of the natural numbers.

\begin{definition}
    Let $O_{\FS}$ be the set of odd natural numbers $n\ge 1$ such that $(\cyclic{n})^*$ is covered by $\{\pm 2^j: j\ge 0\}$; more precisely, for each $x\in\Z$ relatively prime with $n$ there exists $j\ge 0$ such that either $x-2^j$ or $x+2^j$ is divisible by $n$.
\end{definition}

\begin{remark*}
    The first few elements of $O_{\FS}$ are
    \begin{equation*}
        O_{\FS} = \{1, 3, 5, 7, 9, 11, 13, 15, 19, 21, 23, 25, 27, 29, 35, 37, 39, 45, 47, 49,53,55,\dots\},
    \end{equation*}
    and the first few \emph{missing} odd numbers are
    \begin{equation*}
        (2\N+1)\setminus O_{\FS} = \{17, 31, 33, 41, 43, 51, 57, 63, 65, 73, 85, 89, 91, 93, 97, 99, 105, \dots\}. 
    \end{equation*} 
    Let us remark that if $n\in O_{\FS}$ then also all divisors of $n$ belong to $O_{\FS}$. Moreover, if $n\in O_{\FS}$ then $n$ has at most two distinct prime factors. We prove these and some other basic properties of the set $O_{\FS}$ at the end of \cref{sec:basic-facts}.
\end{remark*}

We can now state our main theorem.

\begin{theorem}[Characterization of $\FS$-regular groups]\label{thm:main}
    An abelian group $G$ is $\FS$-regular if and only if $\ord(g)\in O_{\FS}$ for all $g\in G$ with finite order.
\end{theorem}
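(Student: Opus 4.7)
My plan is to reduce the problem to finite cyclic groups and analyze each cyclic case via cyclotomic-unit theory, with a Radon-transform inversion to reassemble the pieces. Using that $\FS$-regularity passes to subgroups (since a counterexample in a subgroup is a counterexample in the ambient group), and that $\Z^{d}$ is $\FS$-regular thanks to the classical results of Selfridge--Straus and Gordon--Fraenkel--Straus cited in the introduction, the question reduces to characterizing $\FS$-regular cyclic groups. The characters of $\cyclic{n}$ convert $\FS(A)=\FS(A')$ into the family of equations $\prod_{a\in A}(1+\zeta_d^{a})=\prod_{a'\in A'}(1+\zeta_d^{a'})$ for every $d\mid n$, inside the cyclotomic ring $\Z[\zeta_d]$, because the Fourier transform of $\FS(A)$ at a character $\chi$ is precisely $\prod_{a\in A}(1+\chi(a))$. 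This multiplicative encoding places the problem squarely within the theory of cyclotomic units.

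\emph{Necessity.} Assume $G$ contains an element of order $n\notin O_{\FS}$. If $n$ is even, the multisets $\{0,h\}$ and $\{h,h\}$ with $h$ of order $2$ both have $\FS=\{0,0,h,h\}$ but are not $\sim_0$-related. If $n$ is odd, fix $x$ coprime to $n$ with $x\not\equiv\pm 2^j\pmod n$ for every $j\ge 0$. I would construct the counterexample by exploiting two canonical identities in cyclotomic units, $1+\zeta_n^{-a}=\zeta_n^{-a}(1+\zeta_n^{a})$ and $(1+\zeta_n^{a})(1-\zeta_n^{a})=1-\zeta_n^{2a}$, which embody the generators $-1$ and $2$ of $\langle-1,2\rangle\leq(\cyclic{n})^*$ and are both compatible with $\sim_0$. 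The assumption $x\notin\langle-1,2\rangle$ guarantees an extra multiplicative relation among cyclotomic units not generated by these two, and hence not by $\sim_0$-moves, which I would translate into an explicit pair of multisets with equal $\FS$ but distinct $\sim_0$-classes.

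\emph{Sufficiency.} Assume every torsion order of $G$ lies in $O_{\FS}$ and $\FS(A)=\FS(A')$. Working in $\cyclic{n}$ with $n\in O_{\FS}$, the structural claim I would prove is that, under $(\cyclic{n})^*=\langle-1,2\rangle$, every multiplicative relation among the cyclotomic units $\{1+\zeta_d^{a}:d\mid n,\ a\in\cyclic{d}\}$ modulo roots of unity is generated by the two identities above. This forces the per-$d$ cyclotomic equalities to entail $\sim_0$-equivalence up to the ambiguity of disentangling the contributions from different divisors $d$; the reconstruction of a single multiset on $\cyclic{n}$ from its cyclotomic ``slices'' is then handled by the novel Radon-transform inversion formula advertised in the abstract, which should invert the map sending a multiset to its family of projections onto cyclic quotients. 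The main obstacle is the cyclotomic structural claim: proving that all multiplicative relations among the $1+\zeta_d^{a}$ are controlled by $\sim_0$-moves precisely when $(\cyclic{n})^*=\langle-1,2\rangle$ is a delicate matter that interlaces the Galois action on cyclotomic unit groups with the combinatorics of the subgroup $\langle-1,2\rangle$, and is the step where the arithmetic hypothesis defining $O_{\FS}$ must be used in full force.
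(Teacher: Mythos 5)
Your proposal captures the two signature ingredients of the paper (cyclotomic units for $\cyclic{n}$, a Radon-transform inversion to assemble higher-rank groups), but it has several genuine gaps that would make it fail as written.

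First, the opening reduction is wrong. You claim that because $\FS$-regularity passes to subgroups and $\Z^d$ is $\FS$-regular by Selfridge--Straus and Gordon--Fraenkel--Straus, "the question reduces to characterizing $\FS$-regular cyclic groups." Those cited results concern sums over fixed-size subsets, not $\FS$, and in any case the inference runs the wrong way: $\FS$-regularity of subgroups is inherited downward, not upward. Knowing that $\Z^d$ and every cyclic subgroup of $G$ are $\FS$-regular does not, by itself, give $\FS$-regularity of $\Z^d\oplus\cyclic{n}$ or even of $(\cyclic{n})^2$. These composition steps are exactly where the paper has to work: one lemma upgrades $\cyclic{n}$ to $(\cyclic{n})^d$ via the Radon inversion formula, a separate combinatorial lemma (using that $\Z$ is totally ordered, plus a cancellation lemma $A+\FS(B)=A'+\FS(B)\Rightarrow A=A'$) upgrades $G$ to $G\oplus\Z$, and a final limiting argument handles arbitrary abelian groups by passing through finitely generated subgroups.

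Second, you describe the Radon transform as reconstructing "a single multiset on $\cyclic{n}$ from its cyclotomic slices." This misplaces its role. In the paper, the cyclotomic analysis on $\cyclic{n}$ is carried out entirely by a rank computation: one shows that the map $\Z^n\to\bigoplus_{d\mid n}K_d$ sending $x\mapsto(\prod_j(1+\omega_d^j)^{x_j})_d$ has image of full rank (this needs $n$ odd but not $n\in O_{\FS}$), computes $\rk K_n=\varphi(n)/2$ via Dirichlet's unit theorem and the theory of cyclotomic units (this is where $n\in O_{\FS}$ enters), and then exhibits a subgroup $L_n$ of the kernel that already has the maximal possible rank, forcing any kernel element to satisfy $x_0=0$ and $x_j+x_{n-j}=0$, i.e.\ $A\sim A'$. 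The Radon transform on $(\cyclic{n})^d$ is a separate device: given that all projections $\psi(A)\sim\psi(A')$ for $\psi\in\Hom((\cyclic{n})^d,\cyclic{n})$, the inversion formula recovers $\mu_A(x)+\mu_A(-x)$ from the projected multiplicities, yielding $A\sim A'$ on $(\cyclic{n})^d$.

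Third, your necessity argument for odd $n\notin O_{\FS}$ is not an argument yet. Passing from "there is an extra multiplicative relation among cyclotomic units" to a pair of multisets with nonnegative multiplicities and equal $\FS$ but different $\sim_0$-classes is not automatic, and you do not supply the translation. The paper sidesteps this entirely with a direct, elementary construction: take $k\in(\cyclic{n})^*\setminus\{\pm 2^j\}$, set $d=\varphi(n)$, and compare $A=\{2^0,\dots,2^{d-1}\}$ with $A'=kA$. Since $\FS(A)=\{0,1,\dots,2^d-1\}$ covers every residue modulo $n$ the same number of times (with one extra $0$), and multiplication by $k$ permutes residues, $\FS(A)=\FS(A')$; while the choice of $k$ forces $A\cap A'=(-A)\cap A'=\varnothing$, hence $A\not\sim_0 A'$.

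Finally, your central structural claim for sufficiency ("every multiplicative relation among the $1+\zeta_d^a$ modulo roots of unity is generated by the two identities") is stated as the crux, but you do not prove it and it is not what the paper proves. The paper's argument is deliberately nonconstructive: it matches ranks rather than classifying relations. Your version, even if true, would need its own proof, which is where the real work lies.
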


As a tool in the proof of \cref{thm:main} (see \cref{subsec:sketch}) we define a novel discrete Radon transform for abelian groups and we prove an inversion formula for it. We refer to \cref{sec:radon} for some motivation on the definition and for an in-depth discussion of the existing related literature. Since the inversion formula for the Radon transform may have other applications beyond the scope of this paper, we state it here for the interested readers.

\begin{theorem}[Inversion formula for the discrete Radon transform]\label{thm:radon-inversion}
    Let $n, d\ge 1$ be positive integers.
    Given a function $f:(\cyclic{n})^d\to\C$, its discrete Radon transform $Rf=R_{n, d}f:\Hom((\cyclic{n})^d,\, \cyclic{n})\times \cyclic{n}\to\C$ is defined as
    \begin{equation*}
        Rf(\psi, c) = \sum_{x:\,\psi(x)=c} f(x).
    \end{equation*}
    One can reconstruct the values of $f$ from $Rf$ through the formula, valid for any $x\in(\cyclic{n})^d$,
    \begin{equation*}
        f(x)
        =
        \frac{1}{n^{d-1}\varphi(n)}
        \sum_{\psi\in \Hom((\cyclic{n})^d, \cyclic{n})}
        Rf(\psi, \psi(x))\prod_{p\mid \psi}
        (1-p^{d-1}),
    \end{equation*}
    where the notation $p\mid \psi$ shall be understood as the fact that the \emph{prime} $p$, divisor of $n$, divides all the elements in the image of $\psi$, or equivalently that $\psi$ takes values into $p\Z/n\Z$.
\end{theorem}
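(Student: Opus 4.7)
The plan is to substitute the definition of $Rf$ into the right-hand side, swap the two summations, and reduce the inversion formula to a purely combinatorial ``kernel identity'' that can then be verified by a short Möbius computation. Concretely, identify every $\psi\in\Hom((\cyclic n)^d,\cyclic n)$ with the tuple $(\psi_1,\ldots,\psi_d)\in(\cyclic n)^d$ such that $\psi(x)=\sum_i\psi_i x_i$, so that the condition $p\mid\psi$ becomes $\psi\in(p\cyclic n)^d$. Substituting $Rf(\psi,\psi(x))=\sum_{y:\,\psi(y)=\psi(x)}f(y)$ and exchanging the sums, the right-hand side rewrites as $\frac{1}{n^{d-1}\varphi(n)}\sum_y f(y)\,K(y-x)$, where
\[
K(z)\defeq\sum_{\psi:\,\psi(z)=0}\prod_{p\mid\psi}(1-p^{d-1}).
\]
Thus it suffices to show that $K(0)=n^{d-1}\varphi(n)$ and $K(z)=0$ whenever $z\neq 0$.

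For the kernel identity I would expand the product by Möbius, $\prod_{p\mid\psi}(1-p^{d-1})=\sum_{m\,\text{sqfree},\,m\mid\psi}\mu(m)\,m^{d-1}$, and then swap sums to obtain $K(z)=\sum_{m\,\text{sqfree},\,m\mid n}\mu(m)\,m^{d-1}\,N_m(z)$, where $N_m(z)$ counts the $\psi\in(m\cyclic n)^d$ with $\sum_i\psi_i z_i\equiv 0\pmod n$. Setting $\psi_i=m\psi'_i$ with $\psi'_i\in\cyclic{(n/m)}$ reduces this to solving $\sum_i\psi'_i z_i\equiv 0\pmod{n/m}$, whose solution space has $(n/m)^{d-1}\gcd(z_1,\ldots,z_d,n/m)$ elements, since the image of $\psi'\mapsto\sum\psi'_i z_i$ is the subgroup of $\cyclic{(n/m)}$ generated by $\gcd(z_1,\ldots,z_d,n/m)$. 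Using $m^{d-1}(n/m)^{d-1}=n^{d-1}$ this yields
\[
K(z)=n^{d-1}\sum_{m\,\text{sqfree},\,m\mid n}\mu(m)\,\gcd(z_1,\ldots,z_d,n/m).
\]

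The remaining sum I would evaluate prime-by-prime. Writing $n=\prod_i p_i^{a_i}$ and $\gcd(z_1,\ldots,z_d,n)=\prod_i p_i^{b_i}$ with $0\le b_i\le a_i$, and parametrising squarefree $m\mid n$ by the subset $S$ of primes it contains, one checks that $\mu(m)=(-1)^{|S|}$ and $\gcd(z_1,\ldots,z_d,n/m)=\prod_{i\in S}p_i^{\min(b_i,\,a_i-1)}\prod_{i\notin S}p_i^{b_i}$. The sum then factors as a product over primes,
\[
\sum_{m\,\text{sqfree},\,m\mid n}\mu(m)\,\gcd(z_1,\ldots,z_d,n/m)=\prod_i\bigl(p_i^{b_i}-p_i^{\min(b_i,\,a_i-1)}\bigr).
\]
The $i$-th factor vanishes when $b_i<a_i$ and equals $\varphi(p_i^{a_i})$ when $b_i=a_i$; hence the whole product is zero unless every $b_i=a_i$, i.e.\ unless $z$ is the zero vector in $(\cyclic n)^d$, and in that case it equals $\varphi(n)$. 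This is exactly the kernel identity, completing the proof.

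The argument is elementary throughout; the only real obstacle is careful bookkeeping — translating the divisibility condition $p\mid\psi$ into a statement about tuples, and recognising the resulting Möbius-weighted sum as a product over primes. A conceptually cleaner alternative is to apply discrete Fourier analysis along each fiber, using the identity $\widehat{Rf(\psi,\cdot)}(k)=\hat f(k\psi)$, and then count the multiplicities with which each character $\xi=k\psi$ of $(\cyclic n)^d$ is represented; this route reduces to essentially the same Möbius computation.
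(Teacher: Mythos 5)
Your proof is correct, and it takes a genuinely different route from the paper. Both arguments reduce, essentially, to the same kernel criterion — namely that the weights $\lambda(\psi)$ must satisfy $\sum_{\psi:\psi(z)=0}\lambda(\psi)=[z=0]$ — but the paths from there diverge. The paper first proves a multiplicativity lemma (showing inverting functions combine via the Chinese Remainder Theorem), thereby reducing to the prime-power case $n=p^k$; there it begins from the standard Fourier inversion formula on $(\cyclic{n})^d$, averages over the $(\cyclic{n})^*$-action using $Rf(a\psi,ac)=Rf(\psi,c)$, brings in the M\"obius function as a Ramanujan-type sum of primitive roots, and then manipulates the resulting expression into the claimed weight. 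Your argument instead works for general $n$ at once: you expand $\prod_{p\mid\psi}(1-p^{d-1})$ by inclusion--exclusion, swap the order of summation, count the solutions of $\sum_i\psi_i'z_i\equiv 0\pmod{n/m}$ directly as $(n/m)^{d-1}\gcd(z_1,\dots,z_d,n/m)$, and observe that the remaining M\"obius-weighted sum of gcds factors over the primes of $n$, with the $i$-th factor vanishing precisely when $p_i^{a_i}\nmid\gcd(z_1,\dots,z_d,n)$. This is more elementary and more self-contained (no explicit appeal to Fourier inversion, no separate CRT lemma), at the mild cost of heavier bookkeeping in the prime-by-prime factorisation; the paper's route, while longer to set up, makes the provenance of the M\"obius weight more transparent by deriving it from the well-known Fourier formula rather than postulating it and verifying the kernel identity. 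One small point of hygiene worth flagging if you write this up: you implicitly fix integer lifts of the $z_i\in\cyclic{n}$ when writing $\gcd(z_1,\dots,z_d,n/m)$; this is harmless since $\gcd(z,N)$ depends only on $z\bmod N$, but it deserves a sentence.
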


\subsection{Sketch of the proof and structure of the paper}\label{subsec:sketch}
Let us briefly describe the strategy that the proof follows, postponing a more detailed presentation to the dedicated sections.

For the negative part of the statement, it is sufficient to show that $\cyclic{n}$ is not $\FS$-regular if $n\not\in O_{\FS}$. For this, we construct an explicit counterexample in \cref{prop:fs-regular-implies-ofs}.

Proving that if the orders belong to $O_{\FS}$ then the group is $\FS$-regular is more complicated and relies on some nontrivial properties of the units of cyclotomic fields and on the inversion formula for a novel discrete Radon transform on finite abelian groups.
The proof is divided into three steps.

\vspace{0.5em}
    \textit{Step 1: Proof for $G=\cyclic{n}$.} Through the polynomial identity
    \begin{equation*}
        \sum_{s\in\FS(A)} t^s \equiv \prod_{a\in A}(1+t^a) \pmod{t^n-1},
    \end{equation*}
    we reduce the $\FS$-regularity of $\cyclic{n}$ to the study of the kernel of the map
    \begin{equation*}
        \Z^n \ni x = (x_0, x_1, \dots, x_{n-1}) \mapsto \Big(\prod_{j=0}^{n-1}(1+\omega_d^j)^{x_j}\Big)_{d\mid n},
    \end{equation*}
    where $\omega_d\in \C$ is a $d$-th primitive root of unity and the codomain of the map consists of tuples indexed by the divisors of $n$.
    Thanks to a dimensional argument, identifying the kernel of such map is equivalent to identifying its image, which is exactly what we do in \cref{lem:phi-has-full-rank}. This is the hardest and most technical proof of the whole paper. Up to this point, we have used only that $n$ is odd. The fact that $n\in O_{\FS}$ is needed in the computation of the rank of the image, which relies heavily on the theory of cyclotomic units (see \cref{lem:Kn-rank}).

    This step is carried out in \cref{sec:finite-cyclic}.
    
\vspace{0.5em}
    \textit{Step 2: $\cyclic{n}$ is $\FS$-regular $\implies$ $(\cyclic{n})^d$ is $\FS$-regular.} Take $A,A'$ multisets with elements in $(\cyclic{n})^d$ such that $\FS(A)=\FS(A')$. Given a homomorphism $\psi:(\cyclic{n})^d\to\cyclic{n}$, by linearity, it holds $\FS(\psi(A)) = \FS(\psi(A'))$, and since $\cyclic{n}$ is $\FS$-regular this implies that $\psi(A)\sim_0\psi(A')$. So, we know that $\psi(A)\sim_0\psi(A')$ for all homomorphisms $\psi:(\cyclic{n})^d\to\cyclic{n}$. In order to deduce that $A\sim_0 A'$, we introduce a discrete Radon transform for finite abelian groups (see \cref{def:radon}) and we use its invertibility  to reconstruct a multiset $B\in\M((\cyclic{n})^d)$ from its \emph{projections} $\{\psi(B):\, \psi\in\Hom((\cyclic{n})^d, \, \cyclic{n})\}$.
    
    This step is performed in \cref{sec:radon}.
    
\vspace{0.5em}
    \textit{Step 3: $G$ is $\FS$-regular $\implies$ $G\oplus \Z$ is $\FS$-regular.} In this step, we exploit crucially that $\Z$ is totally ordered. The argument is short and purely combinatorial. This is done in \cref{sec:multiply-by-Z}.
\vspace{0.5em}

Once these three steps are established, \cref{thm:main} follows naturally, as shown in \cref{sec:proof-main-thm}.
Let us remark here that our proof is not constructive, hence it does not provide an efficient algorithm to find the $\sim_0$-equivalence class of $A$ if $\FS(A)$ is known\footnote{The nonconstructive part of the proof is contained \cref{sec:finite-cyclic}. In fact, we show that a certain map is injective by proving its surjectivity and then applying a standard dimension argument. This kind of reasoning does not produce an efficient way to invert the map we have proven to be injective.}.

To make the paper accessible to a broad audience, in \cref{sec:preliminaries} we recall basic facts about multisets, abelian groups, and cyclotomic units.

\subsection*{Acknowledgements}
The authors are thankful to Fabio Ferri for providing valuable suggestions and references about the theory of cyclotomic units, and also to Michele D'Adderio, Elia Bruè. We want to thank Noah Kravitz because his comments led to a considerable improvement of the inversion formula for the discrete Radon transform.
The second author is supported by the National Science Foundation under Grant No. DMS-1926686.

\section{Notation and Preliminaries}\label{sec:preliminaries}

\subsection{Multisets}\label{subsec:multisets}
A \emph{multiset} with elements in a set $X$ is an unordered collection of elements of $X$ which may contain a certain element more than once \cite{blizard1989}. For example, $\{1, 1, 2, 2, 3\}$ is a multiset.
Rigorously, a multiset $A$ is encoded by a function $\mu_A:X\to\Z_{\ge 0}$ ($\Z_{\ge 0}$ denotes the set of nonnegative integers) such that $\mu_A(x)$ represents the multiplicity of the element $x$ in $A$. For example, if $A=\{1, 1, 2, 2, 3\}$ then $\mu_A(1)=2$, $\mu_A(2)=2$, $\mu_A(3) = 1$.

A multiset $A$ is \emph{finite} if $\sum_{x\in X}\mu_A(x) < \infty$. The cardinality of a finite multiset $A\in\M(X)$ is given by $|A|\defeq \sum_{x\in X}\mu_A(x)$.

Given a set $X$, let us denote with $\M(X)$ the family of finite multisets with elements in $X$.

Let us define the usual set operations on multisets. Notice that all of them are the natural generalization of the standard version when one takes into account the multiplicity of elements.
Fix two multisets $A, B\in \M(X)$.
\begin{description}
    \item[Membership] We say that $x\in X$ is an element of $A$, denoted by $x\in A$, if $\mu_A(x) \ge 1$.
    \item[Inclusion] We say that $A$ is a subset of $B$, denoted by $A\subseteq B$, if $\mu_A(x)\le \mu_B(x)$ for all $x\in X$.
    \item[Union] The union $A\cup B\in\M(X)$ is defined as $\mu_{A\cup B}(x) \defeq \mu_A(x) + \mu_B(x)$. Hence, $\{1\}\cup\{1, 2\}=\{1, 1, 2\}$.
    \item[Cartesian product] The Cartesian product $A\times B\in \M(X\times X)$ is defined as $\mu_{A\times B}((x_1, x_2)) = \mu_A(x_1)\mu_B(x_2)$.
    \item[Difference] If $A\subseteq B$, the difference $B\setminus A$ is defined as $\mu_{B\setminus A}(x) \defeq \mu_B(x) - \mu_A(x)$.
    \item[Pushforward] Given a function $f:X\to Y$, the pushforward $f(A)\in\M(Y)$ of the multiset $A$ (denoted also by $\{f(a): a\in A\}$) is defined as
    \begin{equation*}
        \mu_{f(A)}(y) = \sum_{x\in f^{-1}(y)} \mu_A(x).
    \end{equation*}
    \item[Power set] The power set of $A$ (the family of subsets of $A$), denoted by $\PS(A)\in\M(\M(X))$, is a multiset defined recursively as follows. For the empty multiset, we have $\PS(\varnothing) \defeq \{\varnothing\}$; otherwise let $a\in A$ be an element of $A$ and define
    \begin{equation*}
        \PS(A) \defeq \PS(A\setminus\{a\}) \cup \Big\{A'\cup\{a\}:\, A'\in \PS(A\setminus\{a\})\Big\}.
    \end{equation*}
    Notice that $|\PS(A)| = 2^{|A|}$.
    Whenever we iterate over the subsets of $A$ (e.g., $\{f(A'):\, A'\subseteq A\}$ or $\sum_{A'\subseteq A}f(A')$), the iteration has to be understood over $\PS(A)$ (hence the subsets are counted with multiplicity).
    
    \emph{Taking the complement} is an involution of the power set, i.e., $\PS(A)=\{A\setminus A':\, A'\in\PS(A)\}$, and we have the following identity for the power set of a union
    \begin{equation*}
        \PS(A\cup B) = \{A'\cup B':\, (A', B')\in \PS(A)\times \PS(B)\}.
    \end{equation*}
    \item[Sum (and product)] If the set $X$ is an additive abelian group, we can define the sum $\sum A\in X$ of the elements of $A$ as
    \begin{equation*}
        \sum A\defeq \sum_{x\in X}\mu_A(x)x.
    \end{equation*}
    Analogously, if $X$ is a multiplicative abelian group, one can define the product $\prod A$ of the elements of $A$. 
\end{description}

\subsection{Abelian Groups}\label{subsec:abelian-groups}
Let us recall some basic facts about abelian groups that we will use extensively later on.

Any finitely generated abelian group is isomorphic to a finite product of cyclic groups \cite[Chapter I, Section 8]{lang2002}. We denote with $\Z/n\Z$ the cyclic group with $n$ elements.

Given some elements $g_1, g_2, \dots, g_k\in G$ of an abelian group, we denote with $\langle g_1, g_2,\dots, g_k\rangle$ the subgroup generated by such elements.
Given an element $g\in G$, its order (which may be equal to $\infty$) is denoted by $\ord(g)$.

For an abelian group $G$, its rank $\rk(G)$ is the cardinality of a maximal set of $\Z$-independent\footnote{Some elements $g_1, g_2, \dots, g_k\in G$ are $\Z$-independent if, whenever $\sum_i a_ig_i = 0$ for some $a_1, a_2,\dots, a_k\in \Z$, it holds $a_1=a_2=\cdots=a_k=0$.} elements of $G$.
Let us list some useful properties of the rank (see \cite[Chapter I and XVI]{lang2002}).
\begin{itemize}
    \item Any finitely generated abelian group $G$ is isomorphic to $\Z^{\rk(G)}\oplus G'$ where $G'$ is a finite abelian group.
    \item Given two abelian groups $G, H$, it holds $\rk(G\oplus H)=\rk(G)+\rk(H)$.
    \item For a homomorphism $\phi: G \to H$ of abelian groups, it holds $\rk(G) = \rk(\ker \phi) + \rk(\im \phi)$.
    \item An abelian group has null rank if and only if all elements have finite order.
    \item Let $G_1, G_2, G_3$ be three abelian groups and $\phi_1:G_1\to G_2$, $\phi_2:G_2\to G_3$ be two homomorphisms with full rank, i.e.\ $\rk(\im\phi_1)=\rk(G_2)$ and $\rk(\im\phi_2)=\rk(G_3)$. Then $\phi_2\circ\phi_1:G_1\to G_3$ has full rank as well, i.e.\ $\rk(\im\phi_2\circ \phi_1) = \rk(G_3)$
    \item Given an abelian group $G$, let us denote with $G\otimes\Q$ its tensor product (as a $\Z$-module) with $\Q$ (see \cite[Chapter XVI]{lang2002}). The dimension of $G\otimes\Q$ as vector space over $\Q$ coincides with $\rk(G)$.
    \item For a homomorphism $\phi:G\to H$ of abelian groups, let $\phi\otimes\Q:G\otimes\Q\to H\otimes\Q$ be its tensorization with $\Q$. It holds $\rk(\im\phi)=\dim_\Q(\im(\phi\otimes\Q))$.
\end{itemize}

\subsection{Units of cyclotomic fields}\label{subsec:intro_cyclo}

Given $n\ge 1$, let $\omega_n\defeq\exp(2\pi i/n)$ be the primitive $n$-th root of unity with minimum positive argument.

The algebraic number field $\Q(\omega_n)$ is called \emph{cyclotomic field}. It is well-known that the ring of integers of $\Q(\omega_n)$ coincides with $\Z[\omega_n]$.
Our main focus is the group of units of $\Q(\omega_n)$, that consists of the invertible elements of its ring of integers.

For $0<r<n$ and $s\ge 1$ coprime with $n$, the element $\xi\defeq \frac{1-\omega_n^{rs}}{1-\omega_n^r}$ is a unit of $\Q(\omega_n)$.
Indeed $\xi = 1 + \omega_n^r + \cdots + \omega_n^{(s - 1)r}\in\Z[\omega_n]$ and, if $u\in\N$ is such that $n$ divides $us-1$, then
\begin{equation*}
    \xi^{-1} = \frac{1 - \omega_n^{rus}}{1 - \omega_n^{rs}} = 1 + \omega_n^{rs} + \cdots + \omega_n^{(u - 1)rs} \in \Z[\omega_n].
\end{equation*}

It turns out that these units are sufficient to generate a subgroup of finite index of the units of $\Q(\omega_n)$. The following statement follows from \cite[Theorem 8.3 and Theorem 4.12]{washington97}.
\begin{theorem}\label{thm:Cn-def}
    For any odd $n\ge 3$, the multiplicative group $C_n \subseteq\C$ generated by 
    \begin{equation*}
        \Big\{\frac{1-\omega_n^{rs}}{1-\omega_n^r}:\, \text{$0<r<n$, $s\ge 1$ coprime with $n$}\Big\}
    \end{equation*}
    is a subgroup of finite index of the units of $\Q(\omega_n)$.
\end{theorem}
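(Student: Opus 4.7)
The plan is to deduce the finite-index statement from an equality of $\Z$-ranks via Dirichlet's unit theorem. Since for odd $n\ge 3$ the field $\Q(\omega_n)$ is totally complex of degree $\varphi(n)$ over $\Q$, its unit group decomposes as torsion $\times\,\Z^{\rho}$ with $\rho=\varphi(n)/2-1$. Consequently, proving $[\mathcal O^{*}_{\Q(\omega_n)}:C_n]<\infty$ is equivalent to exhibiting $\rho$ multiplicatively independent elements of $C_n$, i.e., $\rho$ elements whose images under the logarithmic embedding $u\mapsto(\log|\sigma(u)|)_\sigma$, where $\sigma$ ranges over representatives of the complex places modulo complex conjugation, are $\R$-linearly independent.

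A convenient family is $\eta_a\defeq(1-\omega_n^a)/(1-\omega_n)$ with $a$ coprime to $n$ and $1<a<n/2$; these are exactly $\rho$ in number and each belongs to $C_n$ (they correspond to taking $r=1$ in the defining family of the theorem). The remaining task is to establish the non-vanishing of the $\rho\times\rho$ regulator determinant $\det\big(\log|\sigma_b(\eta_a)|\big)_{a,b}$, with $b$ ranging over a set of coset representatives of $(\Z/n\Z)^{*}/\{\pm 1\}$ with one distinguished element removed to factor out the trivial linear relation $\sum_\sigma\log|\sigma(u)|=0$ arising from $|N_{\Q(\omega_n)/\Q}(u)|=1$.

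To compute this determinant I would diagonalize by the characters of $(\Z/n\Z)^{*}$: orthogonality of Dirichlet characters rewrites the entries in spectral form and, after a standard manipulation, expresses the determinant, up to an explicit nonzero rational factor, as $\prod_{\chi} L(1,\chi)$, where $\chi$ runs over the nontrivial even Dirichlet characters modulo $n$. Here one invokes the classical identity $L(1,\chi)=-\tfrac{1}{n}\sum_{a=1}^{n-1}\chi(a)\log|1-\omega_n^a|$, valid for such $\chi$. This character-theoretic factorisation carries the arithmetic content of the argument.

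The main obstacle is the non-vanishing of this product. It reduces to the classical but nontrivial theorem that $L(1,\chi)\ne 0$ for every nontrivial Dirichlet character $\chi$, a fact morally equivalent to Dirichlet's theorem on primes in arithmetic progressions and most cleanly proved via the analytic class number formula for the subfield $\Q(\omega_n+\omega_n^{-1})$. Once this is granted, the regulator is nonzero, $C_n$ spans a $\Z$-sublattice of full rank $\rho$ in the logarithmic embedding of $\mathcal O^{*}_{\Q(\omega_n)}$, and the desired finite-index statement follows.
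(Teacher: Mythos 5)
Your strategy—rank count via Dirichlet plus a regulator/$L$-function calculation—is the right general shape, and it is essentially how the cited reference handles the \emph{prime-power} case. But there is a genuine gap that makes the argument break down for composite $n$, and the paper sidesteps it entirely by citing \cite[Theorems 8.3 and 4.12]{washington97} rather than giving a proof.

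The problem is the restriction to $r=1$. Your candidate family $\eta_a=(1-\omega_n^a)/(1-\omega_n)$, $1<a<n/2$, $(a,n)=1$, consists of the right number of elements, but for $n$ with more than one prime factor it need not have full rank. The identity $L(1,\chi)=-\frac1n\sum_{a=1}^{n-1}\chi(a)\log|1-\omega_n^a|$ that you invoke holds only for $\chi$ \emph{primitive} of conductor $n$. For an imprimitive even character $\chi$ of conductor $f\mid n$, $f<n$, the distribution relations give
\begin{equation*}
\sum_{a\in(\Z/n\Z)^*}\chi(a)\log|1-\omega_n^a|
=\Big(\prod_{p\mid n,\,p\nmid f}\bigl(1-\bar\chi(p)\bigr)\Big)\sum_{a\in(\Z/f\Z)^*}\chi(a)\log\bigl|1-\omega_f^a\bigr|,
\end{equation*}
so the relevant eigenvalue of your $\rho\times\rho$ matrix is $L(1,\chi)$ \emph{times} the Euler product $\prod_{p\mid n,\,p\nmid f}(1-\bar\chi(p))$, not $L(1,\chi)$ alone. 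This Euler factor can vanish: for instance take $n=91=7\cdot13$. There is a nontrivial even character $\chi$ of conductor $7$ (of order $3$), and since $13\equiv 6\pmod 7$ is a cube modulo $7$ one has $\chi(13)=1$, hence $1-\bar\chi(13)=0$. Consequently your regulator determinant vanishes and the $\eta_a$ are multiplicatively dependent modulo roots of unity; they do \emph{not} generate a finite-index subgroup. Thus the claim ``the main obstacle is $L(1,\chi)\ne 0$'' is incorrect for composite $n$: the non-vanishing of the Euler factors is the real issue, and it can fail.

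This is precisely why the theorem's group $C_n$ is taken over \emph{all} $0<r<n$: letting $r$ range over multiples of $n/d$ for $d\mid n$ injects cyclotomic units of the sub-cyclotomic fields $\Q(\omega_d)$, and it is these extra generators that compensate for the degenerate Euler factors. To fix the argument one must either pick the unit family more carefully (as Ramachandra did) or use Sinnott's index formula for the full cyclotomic-unit group; either way the step ``a single family $\eta_a$ with $r=1$ has full rank'' must be abandoned.
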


Thus, applying Dirichlet's unit Theorem (see \cite[Theorem 38]{marcus77}), we are able to compute the rank of $C_n$ (since it coincides with the rank of the group of units of $\Q(\omega_n)$).
\begin{corollary}\label{cor:rk-Cn}
    For any odd $n\ge 3$, we have $\rk(C_n)=\frac{\varphi(n)}2-1$, where $\varphi$ is Euler's totient function (and $C_n$ is defined in \cref{thm:Cn-def}).
\end{corollary}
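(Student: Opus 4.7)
The plan is to invoke \cref{thm:Cn-def} together with Dirichlet's unit theorem (cited in the statement as \cite[Theorem 38]{marcus77}) as black boxes, and simply carry out the embedding count for the cyclotomic field $\Q(\omega_n)$.

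First, I would recall the general fact that if $H\le G$ is a finite index subgroup of a finitely generated abelian group, then $\rk(H)=\rk(G)$; this follows at once from $\rk(G)=\dim_\Q(G\otimes\Q)$ together with the fact that tensoring with $\Q$ kills the finite quotient $G/H$. By \cref{thm:Cn-def}, $C_n$ is of finite index in the unit group of $\Z[\omega_n]$, so it suffices to compute the rank of the latter.

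Next, I would apply Dirichlet's unit theorem: the rank equals $r_1+r_2-1$, where $r_1$ is the number of real embeddings and $r_2$ is the number of conjugate pairs of complex embeddings of $\Q(\omega_n)$. Since $[\Q(\omega_n):\Q]=\varphi(n)$ and every embedding $\Q(\omega_n)\to\C$ sends $\omega_n$ to a primitive $n$-th root of unity, none of these embeddings can be real whenever $n\ge 3$ (a real primitive $n$-th root of unity would force $n\in\{1,2\}$). Hence $r_1=0$ and $r_2=\varphi(n)/2$, yielding $r_1+r_2-1=\varphi(n)/2-1$.

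Combining the two displays gives $\rk(C_n)=\varphi(n)/2-1$, as claimed. There is no real obstacle here: both key ingredients are already stated above, and the only substantive check is that $\Q(\omega_n)$ is totally complex for $n\ge 3$, which is immediate.
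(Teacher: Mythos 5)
Your proposal is correct and matches the paper's approach: the paper also deduces $\rk(C_n)=\rk(\Z[\omega_n]^\times)$ from finite index (via \cref{thm:Cn-def}) and then invokes Dirichlet's unit theorem. You simply spell out the embedding count $r_1=0$, $r_2=\varphi(n)/2$ that the paper leaves implicit.
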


The units of $\Q(\omega_n)$ satisfy a family of nontrivial relations known as distribution relations (see \cite[151]{washington97}). We recall here the relations in the form we will need. Notice that $1+\omega_n^j$ is a unit for $1\le j<n$ because of the identity $1+\omega_n^j=\frac{1-\omega_n^{2j}}{1-\omega_n^j}\in C_n$.

\begin{proposition}[Distribution relations]\label{prop:distribution-relations}
    Let $n \ge 1$ be an odd integer and let $p$ be one of its prime divisors\footnote{The identity holds, with the same proof, also without the assumption that $p$ is prime.}. 
    For any $0 \le j < \frac{n}{p}$, the identity
    \begin{equation*}
        \prod_{k = 0}^{p - 1} (1 + \omega_n^{j + kn/p}) = 1 + \omega_n^{jp}
    \end{equation*}
    holds.
\end{proposition}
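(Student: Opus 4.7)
The plan is to reduce the identity to a one-variable polynomial factorization. The key observation is that $\omega_n^{n/p}$ is a primitive $p$-th root of unity, i.e.\ $\omega_n^{n/p}=\omega_p$, so that $\omega_n^{j+kn/p} = \omega_n^j \cdot \omega_p^k$ for every $k$. Thus, setting $z \defeq \omega_n^j$, the product on the left-hand side becomes
\begin{equation*}
    \prod_{k=0}^{p-1}\bigl(1 + \omega_n^{j+kn/p}\bigr)
    =
    \prod_{k=0}^{p-1}\bigl(1 + z\,\omega_p^k\bigr),
\end{equation*}
and the goal is to prove that this equals $1+z^p$.

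To establish this, I would consider the polynomial $X^p + z^p \in \C[X]$. Since $n$ is odd and $p \mid n$, the prime $p$ is odd, and so $-1$ has a real $p$-th root equal to $-1$ itself; hence the $p$ roots of $X^p = -z^p$ are precisely $-z\,\omega_p^k$ for $k=0,1,\dots,p-1$. Therefore
\begin{equation*}
    X^p + z^p
    = \prod_{k=0}^{p-1}\bigl(X - (-z\,\omega_p^k)\bigr)
    = \prod_{k=0}^{p-1}\bigl(X + z\,\omega_p^k\bigr).
\end{equation*}
Specializing at $X=1$ yields $1+z^p = \prod_{k=0}^{p-1}(1+z\,\omega_p^k)$, and substituting $z=\omega_n^j$ gives exactly $1+\omega_n^{jp}$ on the left and the desired product on the right.

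There is no real obstacle here: the only subtle point is the use of the parity of $p$ to factor $X^p + z^p$ (if $p$ were $2$, one would need $X^p - z^p$ instead, and the right-hand side would differ by a sign), which is guaranteed by the hypothesis that $n$ is odd. The range assumption $0 \le j < n/p$ is not needed for the identity to hold; it only ensures that the exponents $j + kn/p$ enumerate a canonical set of representatives modulo $n$ in the intended application.
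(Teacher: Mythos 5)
Your proof is correct and is essentially the same argument as the paper's: both realize the factors $1+\omega_n^{j+kn/p}$ as the $p$ roots of a degree-$p$ monic polynomial (you factor $X^p+z^p$ and evaluate at $X=1$; the paper applies Vieta to $(t-1)^p-\omega_n^{jp}$), with the oddness of $p$ entering in exactly the same place.
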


\begin{proof}
    The numbers $\{1 + \omega_n^{j + kn/p}\}_{0 \le k < p}$ are the roots of the monic polynomial
    $(t - 1)^p - \omega_n^{jp} \in \C[t]$.
    Therefore, their product equals the constant term of the polynomial multiplied by $(-1)^p$,
    which is $((-1)^p-\omega_n^{jp})(-1)^p = 1 + \omega_n^{jp}$.
\end{proof}

\section{Definitions and basic facts}\label{sec:basic-facts}
In this section we give some fundamental definitions (some of them are already present in the introduction, we repeat them here for the ease of the reader) and we prove one basic result which will be useful multiple times in the paper.
\begin{definition}
    Let $G$ be an additive abelian group and take $A\in\M(G)$.
    The \emph{subset sums multiset} of $A$ is (we adopt the notation of \cite{TaoVu2006}) 
    \begin{equation*}
        \FS(A) \defeq \Big\{\sum B : B \in \PS(A)\Big\},
    \end{equation*}
    that is, the multiset whose elements are the sums of the subsets of $A$.
\end{definition}
When studying the injectivity of $\FS$, one soon notices that if we take a multiset $A\in\M(G)$ and we flip the sign of a subset of its elements with zero sum, obtaining another multiset $A'\in\M(G)$, then the subset sums do not change, i.e.\ $\FS(A)=\FS(A')$. Therefore, the following definition and the results of \cref{lem:basic-properties} should appear natural.

\begin{definition}\label{def:sim}
    Given an additive abelian group $G$, we define the equivalence relations $\sim$ and $\sim_0$ over $\M(G)$ as follows:
    \begin{itemize}
        \item Given $A, A'\in\M(G)$, $A \sim A'$ if $A'$ is obtained from $A$ by changing the sign of the elements of a subset of $A$. More formally, $A\sim A'$ if and only if there exists $B\subseteq A$ such that $A' = (A\setminus B)\cup (-B)$.
        \item Given $A, A'\in\M(G)$, $A \sim_0 A'$ if $A'$ is obtained from $A$ by changing the sign of the elements of a zero-sum subset of $A$. More formally, $A\sim_0 A'$ if and only if there exists $B\subseteq A$ with null sum $\sum B=0_G$ such that $A' = (A\setminus B)\cup (-B)$.
    \end{itemize}
\end{definition}
Notice that the relations $\sim$ and $\sim_0$ are reflective and transitive.

\begin{lemma}\label{lem:basic-properties}
    Given two multisets $A, A'\in\M(G)$ with elements in an abelian group $G$, we have the following statements concerning the relationship between $\sim_0$, $\sim$ and $\FS$. 
    \begin{enumerate}[topsep=5pt,itemsep=5pt,label=\textit{(\arabic*)}]
        \item \label{it:lem:basic-sim0-implies-equal} If $A\sim_0 A'$ then $\FS(A)=\FS(A')$. \
        \item \label{it:lem:basic-sim-implies-shift} If $A\sim A'$, then there is $g\in G$ such that $\FS(A) = \FS(A')+g$.
        \item \label{it:lem:basic-sim-and-fs-implies-sim0} Assume that $G$ does not have elements with order $2$. If $\FS(A)=\FS(A')$ and $A\sim A'$, then $A\sim_0 A'$.
        \item \label{it:lem:basic-shift-implies-equal} If $\FS(A)=\FS(A')+g$ for some $g\in G$, then there exists $\M(G)\ni A''\sim A'$ such that $\FS(A) = \FS(A'')$.
    \end{enumerate}
\end{lemma}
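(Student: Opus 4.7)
The plan is to prove all four parts using a single combinatorial bijection. Given $A \in \M(G)$ and $B \subseteq A$, setting $A' \defeq (A \setminus B) \cup (-B)$, I define $\Phi_B : \PS(A) \to \PS(A')$ by $\Phi_B(C) \defeq (C \setminus B) \cup (-(B \setminus C))$. The routine verifications are that $\Phi_B$ is a bijection (its inverse is $\Phi_{-B}$) and that the decompositions $\sum C = \sum(C \cap B) + \sum(C \setminus B)$ and $\sum B = \sum(C \cap B) + \sum(B \setminus C)$ combine to give
\begin{equation*}
    \sum \Phi_B(C) = \sum(C \setminus B) - \sum(B \setminus C) = \sum C - \sum B.
\end{equation*}
Pushing this forward to multisets yields $\FS(A') = \FS(A) - \sum B$, which immediately proves \ref{it:lem:basic-sim0-implies-equal} (take $\sum B = 0$) and \ref{it:lem:basic-sim-implies-shift} (take $g \defeq \sum B$).

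For \ref{it:lem:basic-sim-and-fs-implies-sim0}, the idea is to combine the hypothesis $\FS(A) = \FS(A')$ with \ref{it:lem:basic-sim-implies-shift} to obtain $\FS(A') = \FS(A') + g$, where $g \defeq \sum B$ and $B$ witnesses $A \sim A'$. I would then extract a scalar invariant by summing the elements of each side as multisets: every element of $A'$ lies in exactly $2^{|A'|-1}$ of the $2^{|A'|}$ subsets in $\PS(A')$, so $\sum_{x \in \FS(A')} x = 2^{|A'|-1}\sum A'$, while $\FS(A') + g$ has total sum larger by $2^{|A'|} g$. Hence $2^{|A'|} g = 0$. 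Under the no-$2$-torsion hypothesis, iterating the implication $2h = 0 \Rightarrow h = 0$ shows that $2^k g = 0 \Rightarrow g = 0$, so $\sum B = 0$, and $B$ itself witnesses $A \sim_0 A'$.

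For \ref{it:lem:basic-shift-implies-equal}, I would first observe the identity $\FS(A) = \sum A - \FS(A)$, obtained from the complementation involution $C \mapsto A \setminus C$ of $\PS(A)$, which sends $\sum C$ to $\sum A - \sum C$. Combined with the hypothesis $\FS(A) = \FS(A') + g$, this gives $\FS(A') = \FS(A) - g$; since the empty sum shows $0 \in \FS(A)$, we get $-g \in \FS(A')$, so there exists $B \subseteq A'$ with $\sum B = -g$. Setting $A'' \defeq (A' \setminus B) \cup (-B) \sim A'$ and applying the first paragraph's identity (with $A'$ in place of $A$) yields $\FS(A'') = \FS(A') - \sum B = \FS(A') + g = \FS(A)$, as required. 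I expect no real obstacle anywhere; the only mildly nontrivial ingredient is the 2-torsion argument in \ref{it:lem:basic-sim-and-fs-implies-sim0}, while the rest is bookkeeping around the bijection $\Phi_B$.
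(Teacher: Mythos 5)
Your overall strategy and all four arguments closely track the paper's own proof, and parts \ref{it:lem:basic-sim-and-fs-implies-sim0} and \ref{it:lem:basic-shift-implies-equal} are correct as written (the complementation identity you invoke in \ref{it:lem:basic-shift-implies-equal} is an unneeded detour: $\FS(A') = \FS(A) - g$ is immediate from the hypothesis, and you only ever use $0 \in \FS(A)$). However, there is a genuine gap in the step you call a ``routine verification'': the map $\Phi_B$ is \emph{not} a bijection $\PS(A) \to \PS(A')$ in the multiset setting, and $\Phi_{-B}$ is not its inverse. Take $A = \{1,1\}$ and $B = \{1\}$, so $A' = \{1,-1\}$. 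Then $\PS(A) = \{\varnothing, \{1\}, \{1\}, \{1,1\}\}$, and applying $\Phi_B$ elementwise gives $\{\{-1\}, \varnothing, \varnothing, \{1\}\}$, which is not $\PS(A') = \{\varnothing, \{1\}, \{-1\}, \{1,-1\}\}$. Indeed the subset $\{1,-1\}$ is never hit: $\Phi_B(C) = \{1,-1\}$ would require $C\setminus B = \{1\}$ and $B\setminus C = \{1\}$ simultaneously, i.e.\ $\mu_C(1) \ge 2$ and $\mu_C(1) = 0$. Likewise $\Phi_B(\Phi_{-B}(\{1,-1\})) = \Phi_B(\{1\}) = \varnothing \ne \{1,-1\}$. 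The obstruction is exactly that a sub-multiset $C$ of $A$ does not remember which copies of a repeated element lie in $B$, so your formula collapses distinct choices.

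The conclusion $\FS(A') = \FS(A) - \sum B$ is still true, but your map does not prove it. The paper's fix is to avoid $\Phi_B$ altogether and instead use the multiset identity $\PS(A') = \bigl\{C \cup (-D) : (C, D) \in \PS(A\setminus B) \times \PS(B)\bigr\}$ together with the genuine involution $D \mapsto B\setminus D$ of $\PS(B)$, which rewrites $\FS(A')$ as $\bigl\{\sum C + \sum D - \sum B : (C, D) \in \PS(A\setminus B) \times \PS(B)\bigr\} = \FS(A) - \sum B$. Your $\Phi_B$ is precisely the pushforward of this pair-correspondence to unlabeled sub-multisets, and the bijectivity is exactly what gets lost in the pushforward. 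Replacing your bijection claim with the product decomposition repairs the proof; parts \ref{it:lem:basic-sim-and-fs-implies-sim0} and \ref{it:lem:basic-shift-implies-equal} then go through unchanged.
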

\begin{proof} 
    The following paragraph describes a very simple bijection in a very complicated way, this is due to the formalism necessary to handle the multiplicities of elements in multisets. We suggest the reader to refer to the picture \cref{fig:sim0-same-fs}, which shall be much clearer than the proof itself.
    
    If $A\sim A'$, then, by definition, there is $B\subseteq A$ such that $A'=(A\setminus B)\cup(-B)$.
    So, we have
    \begin{align*}
        \PS(A') &= 
        \Big\{C\cup(-D):\, (C, D)\in \PS(A\setminus B)\times \PS(B)\Big\}
        \\
        &=
        \Big\{C\cup(-(B\setminus D)):\, (C, D)\in \PS(A\setminus B)\times \PS(B)\Big\}
    \end{align*}
    and therefore
    \begin{equation}\label{eq:local3424}
    \begin{aligned}
        \FS(A') 
        &= 
        \Big\{\sum C + \sum D -\sum B:\, (C, D)\in \PS(A\setminus B)\times \PS(B)\Big\}
        \\
        &=
        \Big\{\sum C + \sum D:\, (C, D)\in \PS(A\setminus B)\times \PS(B)\Big\} - \sum B 
        \\
        &= \FS(A)-\sum B.
    \end{aligned}
    \end{equation}
    This proves \cref{it:lem:basic-sim-implies-shift}. 
    
    Notice that if $A\sim A'$ and $\sum B=0$, then \cref{eq:local3424} implies that $\FS(A)=\FS(A')$. Hence also \cref{it:lem:basic-sim0-implies-equal} are proven.

    Let us show \cref{it:lem:basic-sim-and-fs-implies-sim0}. The assumption $\FS(A)=\FS(A')$, together with \cref{eq:local3424}, implies that $\FS(A)=\FS(A)-\sum B$. By taking the sum of the elements of the two multisets, we get
    \begin{equation*}
        \sum(\FS(A)) = \sum(\FS(A)-\sum B) = \sum(\FS(A))-|\FS(A)|\cdot \sum B,
    \end{equation*}
    thus $2^{|A|}\sum B = 0_G$. 
    Since $G$ has no elements of order $2$, we deduce $\sum B = 0_G$ and therefore $A\sim_0 A'$ as desired.
    
    In order to prove \cref{it:lem:basic-shift-implies-equal}, notice that $0_G\in \FS(A)$ and thus $-g\in \FS(A')$; so there is $B\subseteq A'$ such that $\sum B = -g$. Let $A''\sim A$ be the multiset $A''\defeq (A'\setminus B)\cup (-B)$. The formula \cref{eq:local3424} (with $A, A'\to A', A''$) yields $\FS(A'') = \FS(A') - \sum B = \FS(A')+g = \FS(A)$ as desired.
\end{proof}

Let us recall the definition of $\FS$-regular groups already given in the introduction.
\begin{definition}
    An abelian group $G$ is $\FS$-regular if, for any $A, A'\in\M(G)$, it holds $\FS(A)=\FS(A')$ if and only if $A\sim_0 A'$.
\end{definition}
Notice that if $G$ is $\FS$-regular, then also its subgroups are $\FS$-regular.
Moreover, it is always true that $A\sim_0 A'$ implies $\FS(A)=\FS(A')$ (see \cref{lem:basic-properties}-\cref{it:lem:basic-sim0-implies-equal}) and therefore the content of the $\FS$-regularity is the opposite implication, which does not hold for all groups.

As anticipated in the introduction, the main result of our work brings into play a subset $O_{\FS}$ of the natural numbers.
We recall its definition and explore some basic properties of these numbers.
\begin{definition}
    Let $O_{\FS}$ be the set of odd natural numbers $n\ge 1$ such that $(\cyclic{n})^*$ is covered by $\{\pm 2^j: j\ge 0\}$; more precisely, for each $x\in\Z$ relatively prime with $n$ there exists $j\ge 0$ such that either $x-2^j$ or $x+2^j$ is divisible by $n$.
\end{definition}
The sequence of the elements of $O_{\FS}$ greater than $1$ is given by OEIS \href{https://oeis.org/A333854}{A333854}, while the complement (in the odd integers greater than $1$) is \href{https://oeis.org/A333855}{A333855}.

\begin{proposition}
    \label{prop:OFS-stability-for-divisors}
    Let $n$ be an element of $O_{\FS}$. Then, all the positive divisors of $n$ are in $O_{\FS}$ as well.
\end{proposition}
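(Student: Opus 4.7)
The plan is to prove the contrapositive-free direction directly: given a positive divisor $d \mid n$ with $n \in O_{\FS}$, I will verify the defining property of $O_{\FS}$ for $d$ by lifting coprime representatives modulo $d$ to coprime representatives modulo $n$.

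First I would note that $d$ is odd, since it divides the odd integer $n$. The heart of the argument is then the following lifting claim: for every $x \in \Z$ with $\gcd(x, d) = 1$, there exists $x' \in \Z$ with $x' \equiv x \pmod d$ and $\gcd(x', n) = 1$. Once this is established, the definition of $O_{\FS}$ applied to $n$ provides $j \ge 0$ such that $n \mid x' \mp 2^j$; reducing modulo $d$ and using $x' \equiv x \pmod d$ yields $d \mid x \mp 2^j$, which is exactly what $d \in O_{\FS}$ requires.

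To prove the lifting claim, I would write $x' = x + kd$ and choose $k \in \Z$ so that no prime $p$ dividing $n$ divides $x + kd$. For primes $p \mid d$, the congruence $x + kd \equiv x \pmod p$ together with $\gcd(x, d) = 1$ (hence $p \nmid x$) ensures $p \nmid x + kd$ automatically, independently of $k$. For primes $p \mid n$ with $p \nmid d$, the element $d$ is invertible modulo $p$, so the condition $p \nmid x + kd$ translates to avoiding a single residue class for $k$ modulo $p$. Since $n$ is odd, every such $p$ satisfies $p \ge 3$, so at least one admissible residue class modulo $p$ remains; the Chinese Remainder Theorem then produces a suitable $k \in \Z$.

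I do not anticipate a genuine obstacle here: the entire argument is an exercise in elementary congruences, and the key observation is that oddness of $n$ (hence $p \ge 3$ for all $p \mid n$) guarantees room to perform the CRT lift. The only point requiring a bit of care is keeping the two types of primes (those dividing $d$ and those not dividing $d$) separate in the coprimality check.
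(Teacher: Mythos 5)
Your proof is correct and follows essentially the same route as the paper: lift a representative $x$ coprime to $d$ to one coprime to $n$, apply the $O_{\FS}$ property of $n$, and reduce modulo $d$. The only difference is that the paper asserts the existence of the lift $x+md$ without comment, whereas you spell out the CRT argument; a minor aside is that the CRT step would in fact work even for $p=2$ (one residue class out of two remains), so the oddness of $n$ is not what makes the lift possible, though it is needed for $d$ to be eligible for $O_{\FS}$ in the first place.
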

\begin{proof}
    Take a positive divisor $d$ of $n$.
    Let $x \in \Z$ be relatively prime with $d$.
    There exists $m \in \Z$ so that $x + md$ is relatively prime with $n$.
    Since $n \in O_{\FS}$, there exists $j \in \N$ such that either $x + md - 2^j$ or $x + md + 2^j$ is a multiple of $n$, and thus a multiple of $d$ as well. Therefore either $x-2^j$ or $x+2^j$ is a multiple of $d$.
\end{proof}

In the following, we denote by $\ord_n(x)$ the multiplicative order of $x$ in $\cyclic{n}$, and by $\varphi$ Euler's totient function.

\begin{proposition}
    \label{prop:OFS-characterization}
    An odd positive integer $n$ is a member of $O_{\FS}$ if and only if one of the following holds:
    \begin{enumerate}[topsep=5pt,itemsep=5pt,label=(\roman*)]
        \item $\ord_n(2) = \varphi(n)$;
        \item \label{item:OFS-characterization-ii} $\ord_n(2) = \varphi(n) / 2$ and either $4 \nmid \varphi(n)$ or $2^{\varphi(n)/4} \not\equiv -1 \pmod{n}$.
    \end{enumerate}
\end{proposition}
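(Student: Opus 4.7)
The plan is to reformulate the covering condition defining $O_{\FS}$ as a statement about the order of the subgroup $H \defeq \langle -1, 2 \rangle$ of $(\cyclic{n})^*$, and then split into two cases according to whether $-1$ already lies in $\langle 2 \rangle$.

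First, I observe that the set $\{\pm 2^j \bmod n : j \ge 0\}$ is closed under multiplication and contains both $-1$ and $2$, so it coincides with $H$. Hence $n \in O_{\FS}$ if and only if $H = (\cyclic{n})^*$, equivalently (by Lagrange) $|H| = \varphi(n)$. Write $K \defeq \langle 2 \rangle \le (\cyclic{n})^*$, so $|K| = \ord_n(2)$. If $-1 \in K$ then $H = K$ and $|H| = \ord_n(2)$; otherwise $H = K \sqcup (-K)$ is a disjoint union and $|H| = 2\ord_n(2)$. Since $K$ is cyclic, $-1 \in K$ is equivalent to $K$ containing an element of order $2$ equal to $-1$: the unique element of order two in a cyclic group of even order being the generator raised to half its order, this happens if and only if $\ord_n(2)$ is even and $2^{\ord_n(2)/2} \equiv -1 \pmod{n}$.

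Combining these observations yields the characterization. In the case $-1 \in K$, the equation $|H| = \varphi(n)$ becomes $\ord_n(2) = \varphi(n)$, which is (i). In the case $-1 \notin K$, it becomes $\ord_n(2) = \varphi(n)/2$; substituting this value of $\ord_n(2)$ into the criterion of the previous paragraph and negating, the condition $-1 \notin K$ translates precisely to ``$\varphi(n)/2$ is odd, i.e., $4 \nmid \varphi(n)$, or $2^{\varphi(n)/4} \not\equiv -1 \pmod n$'', which is (ii). The proof is essentially routine group-theoretic bookkeeping, so I do not expect any real obstacle; the only point requiring some care is unwinding the two alternatives in (ii) as the negation of ``$-1 \in \langle 2 \rangle$'' under the hypothesis $\ord_n(2) = \varphi(n)/2$.
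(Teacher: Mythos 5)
Your proof is correct and matches the paper's approach in substance: both hinge on the observation that $\{\pm 2^j \bmod n\}$ has size $\ord_n(2)$ or $2\ord_n(2)$ according as $-1$ does or does not lie in $\langle 2\rangle$, and then identify when this quantity equals $\varphi(n)$. Your explicit subgroup/coset phrasing (Lagrange on $H = \langle -1,2\rangle$ and the unique involution of the cyclic group $\langle 2\rangle$) is a slightly cleaner packaging of the same counting argument the paper carries out via the inequality $|\{\pm 2^j\}| \le 2\ord_n(2)$ and a case split on $\ord_n(2) \in \{\varphi(n), \varphi(n)/2\}$.
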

\begin{proof}
    Notice that, in $\cyclic{n}$, it holds
    \begin{equation}
        \label{eq:powers-of-2-inequality}
        |\{\pm 2^j : j \ge 0\}| \le 2|\{2^j : j \ge 0\}| = 2\ord_n(2).
    \end{equation}
    Thus, if $n \in O_{\FS}$ then necessarily $\ord_n(2) \ge \varphi(n) / 2$.

    If $\ord_n(2) = \varphi(n)$, then $\{\pm 2^j : j \ge 0\} = \{2^j : j \ge 0\} = (\cyclic{n})^*$.

    If $\ord_n(2) = \varphi(n) / 2$, then in order to have equality in \eqref{eq:powers-of-2-inequality} it is necessary and sufficient that $2^j \not\equiv -2^{j'} \pmod{n}$ for all $j, j' \ge 0$, which is equivalent to $2^j \not\equiv -1 \pmod{n}$ for all $j \ge 0$.
    If $4 \nmid \varphi{n}$, the latter is impossible.
    Otherwise, the only $0 \le j < \varphi(n)$ for which the congruence can be true is $\varphi(n) / 4$, hence \ref{item:OFS-characterization-ii} follows.
\end{proof}

\begin{proposition}
    \label{prop:OFS-2-primes}
    If $n \in O_{\FS}$, then $n$ is divided by at most two distinct primes.
\end{proposition}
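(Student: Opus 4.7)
The plan is to argue by contradiction. Assume $n \in O_{\FS}$ factors as $n = p_1^{a_1}\cdots p_k^{a_k}$ with $k \ge 3$ distinct (odd) primes, and derive that this contradicts the fundamental bound $\ord_n(2) \ge \varphi(n)/2$ that $n \in O_{\FS}$ forces. This bound is immediate from the definition: if $\{\pm 2^j : j \ge 0\}$ covers all $\varphi(n)$ residues of $(\cyclic{n})^*$, and this set has at most $2\ord_n(2)$ elements, then $\ord_n(2) \ge \varphi(n)/2$ (this is exactly inequality \eqref{eq:powers-of-2-inequality} already used in the proof of \cref{prop:OFS-characterization}).

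By the Chinese Remainder Theorem, $\ord_n(2) = \operatorname{lcm}_i \ord_{p_i^{a_i}}(2)$, and since each $\ord_{p_i^{a_i}}(2)$ divides $\varphi(p_i^{a_i})$, we get the upper bound $\ord_n(2) \le \operatorname{lcm}_i \varphi(p_i^{a_i})$. The heart of the argument is a prime-by-prime comparison between $\varphi(n) = \prod_i \varphi(p_i^{a_i})$ and the corresponding least common multiple: for every prime $q$,
\begin{equation*}
    v_q\!\Big(\frac{\prod_i \varphi(p_i^{a_i})}{\operatorname{lcm}_i \varphi(p_i^{a_i})}\Big) = \sum_i v_q(\varphi(p_i^{a_i})) - \max_i v_q(\varphi(p_i^{a_i})).
\end{equation*}
Specializing to $q = 2$ and using that each $\varphi(p_i^{a_i}) = p_i^{a_i-1}(p_i - 1)$ is even (because $p_i$ is odd), every $2$-adic valuation on the right is at least $1$, so the sum minus the maximum is bounded below by $k - 1 \ge 2$. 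Therefore $\varphi(n) \ge 4 \operatorname{lcm}_i \varphi(p_i^{a_i}) \ge 4 \ord_n(2)$, contradicting $\ord_n(2) \ge \varphi(n)/2$.

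There is no real obstacle: the entire proof reduces to the elementary observation that a product of $k \ge 3$ positive even integers is at least $2^{k-1}$ times their least common multiple. The only subtlety worth flagging is that the oddness of $n$ is used solely to ensure that each local factor $\varphi(p_i^{a_i})$ is even, which is precisely what makes the $2$-adic slack accumulate across the distinct prime factors of $n$.
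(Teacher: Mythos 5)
Your proof is correct and rests on the same number-theoretic observation as the paper's: because each $\varphi(p_i^{a_i})$ is even, passing from their product $\varphi(n)$ to their $\operatorname{lcm}$ (which bounds $\ord_n(2)$) loses at least a factor of $2^{k-1}\ge 4$, contradicting the bound $\ord_n(2)\ge\varphi(n)/2$. The paper streamlines the bookkeeping by first reducing to $n=pqr$ via \cref{prop:OFS-stability-for-divisors} and then invoking \cref{prop:OFS-characterization}, while you carry out the $2$-adic valuation count directly on general $n$; the underlying mechanism is identical.
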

\begin{proof}
    In view of \cref{prop:OFS-stability-for-divisors} and working by contradiction, it is sufficient to show that $pqr \not\in O_{\FS}$ whenever $p$, $q$, $r$ are distinct odd primes.

    Since $q - 1$ and $r - 1$ are even, $p - 1$ divides $h := (p - 1)(q - 1)(r - 1) / 4$, and thus $2^h \equiv 1 \pmod{p}$.
    Likewise, $2^h \equiv 1$ modulo $q$ and $r$.
    This implies that $2^h \equiv 1 \pmod{pqr}$, therefore $\ord_{pqr}(2) \mid h = \varphi(pqr) / 4$.
    By \cref{prop:OFS-characterization}, we deduce that $pqr \not\in O_{\FS}$.
\end{proof}

One might wonder whether it is true that every $n \in O_{\FS}$ has at least one multiple in $O_{\FS}$.
This is false: a counterexample is $3p$ with $p=3511$.
It can be verified that $\ord_{3p}(2) = p-1$ and $2^{(p-1)/2}\not\equiv-1\pmod{3p}$, and thus $3p \in O_{\FS}$ thanks to \cref{prop:OFS-characterization}.
\cref{prop:OFS-2-primes} tells us that any multiple of $3p$ that belongs to $O_{\FS}$ must be of the form $3^a \cdot p^b$, so it is enough to check that $9p$ and $p^2$ are not in $O_{\FS}$, which is true (since $\ord_{9p}(2)=p-1$ and $\ord_{p^2}(p)=(p-1)/2$).
The number $p=3511$ is a Wieferich prime (cf. \cite{CrandallDilcherPomerance1997}), that is, a prime $p$ such that $p^2$ divides $2^{p - 1} - 1$ (and, in fact, one of the only two known such primes).
It is natural to use a Wieferich prime $p$ in this construction because, even if $p\in O_{\FS}$, the fact that $\ord_{p^2}(2) \mid p - 1$ guarantees that $p^2\not\in O_{\FS}$.

\section{\texorpdfstring{$\FS$}{FS}-regularity of cyclic groups}\label{sec:finite-cyclic}
In this section we characterize the $\FS$-regular finite cyclic groups; the two main results are \cref{prop:fs-regular-implies-ofs} and \cref{prop:ofs-implies-fs-regular}.

To show that if $n\not\in O_{\FS}$ then $\cyclic{n}$ is not $\FS$-regular we produce an explicit counterexample.

\begin{proposition}\label{prop:fs-regular-implies-ofs}
    For any $n\not\in O_{\FS}$, the group $\cyclic{n}$ is not $\FS$-regular.
\end{proposition}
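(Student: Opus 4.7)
The plan is to exhibit, for each $n\not\in O_{\FS}$, explicit multisets $A,A'\in\M(\cyclic{n})$ with $\FS(A)=\FS(A')$ but $A\not\sim_0 A'$.

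If $n$ is even then $n/2\in\cyclic{n}$ has order $2$ and the pair $A=\{0,n/2\}$, $A'=\{n/2,n/2\}$ works: both have $\FS$ equal to $\{0,0,n/2,n/2\}$, yet every sign flip inside $A$ is the identity (as $-0=0$ and $-n/2=n/2$), so the $\sim_0$-class of $A$ is $\{A\}$ and in particular does not contain $A'$. From now on I may assume $n$ is odd with $n\not\in O_{\FS}$, and fix a witness $x\in\Z$ coprime with $n$ such that $x\not\equiv\pm 2^j\pmod{n}$ for every $j\ge 0$. Set $k=\ord_n(2)$ and let
\begin{equation*}
    A=\{2^j \bmod n : 0\le j<k\},\qquad A'=\{x\cdot 2^j \bmod n : 0\le j<k\},
\end{equation*}
both (multiplicity-free) $k$-element subsets of $\cyclic{n}$.

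To prove $\FS(A)=\FS(A')$ I would use the generating-function identity
\begin{equation*}
    \sum_{s\in\FS(A)} t^s \equiv \prod_{a\in A}(1+t^a)\pmod{t^n-1},
\end{equation*}
so the claim reduces to checking $\prod_{a\in A}(1+\zeta^a)=\prod_{a'\in A'}(1+\zeta^{a'})$ at every $n$-th root of unity $\zeta$. At $\zeta=1$ both products equal $2^k$. For $\zeta\ne 1$, the telescoping identity $\prod_{j=0}^{k-1}(1+s^{2^j})=(1-s^{2^k})/(1-s)$ applied at $s=\zeta$ (respectively $s=\zeta^x$) gives value $1$ on both sides: indeed $s^{2^k}=s$ because $\ord(\zeta)$ divides $n$ and $2^k\equiv 1\pmod{n}$, and $s\ne 1$ because $\gcd(x,n)=1$ forces $\zeta^x\ne 1$ whenever $\zeta\ne 1$.

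For $A\not\sim_0 A'$ it suffices to show $A\not\sim A'$, since a $\sim_0$-chain is in particular a $\sim$-chain. The invariant I would use is the pushforward under the quotient map $q:\cyclic{n}\to\cyclic{n}/\{\pm 1\}$, which is manifestly preserved by $\sim$ because sign flips fix $q$. Now $q(A)$ is the image of the subgroup $\langle 2\rangle\subseteq(\cyclic{n})^*$, while $q(A')=\bar{x}\cdot q(A)$. Since $q(\langle 2\rangle)$ is itself a subgroup of $(\cyclic{n})^*/\{\pm 1\}$, the two images can coincide as multisets only if $\bar{x}\in q(\langle 2\rangle)$, equivalently $x\in\langle 2\rangle\cdot\{\pm 1\}=\{\pm 2^j\}$, contradicting the choice of $x$. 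The main subtlety is this last step, where the hypothesis $x\not\in\{\pm 2^j\}$ must be repackaged as an invariant preserved by every sign-flipping operation; the $\FS$ computation itself is a quick consequence of the telescoping identity.
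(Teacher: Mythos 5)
Your proof is correct, and its global plan — exhibit explicit $A, A'$ with $\FS(A) = \FS(A')$ and $A \not\sim_0 A'$, splitting into $n$ even and $n$ odd — matches the paper's. The verifications differ in the odd case. The paper takes $d = \varphi(n)$ and shows directly that $\FS(\{2^0, \dots, 2^{d-1}\})$, viewed as $\{0, 1, \dots, 2^d - 1\}$ reduced modulo $n$, equals $\{0\}$ together with $(2^d - 1)/n$ full copies of $\cyclic{n}$, on which multiplication by a unit is a permutation; this is a purely combinatorial counting argument with no polynomials at all. You instead pass to the generating polynomial $\prod_{a \in A}(1 + t^a)\bmod (t^n - 1)$ and evaluate it at every $n$-th root of unity via the telescoping factorization $\prod_{j=0}^{k-1}(1 + s^{2^j}) = (1 - s^{2^k})/(1 - s)$, which is valid and closer in spirit to the cyclotomic machinery that drives the rest of the paper. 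A side benefit of your choice is the tidier, multiplicity-free counterexample of size $\ord_n(2)$ rather than $\varphi(n)$. The obstruction to $A \sim_0 A'$ is the same $\pm$-orbit invariant in both proofs; your phrasing, via the pushforward to $\cyclic{n}/\{\pm 1\}$ combined with the fact that $q(\langle 2\rangle)$ is a subgroup so that $q(\langle 2\rangle) = \bar x\, q(\langle 2\rangle)$ forces $x\equiv \pm 2^j$, is clean and makes the role of the hypothesis $n\notin O_{\FS}$ explicit. Your even case is the paper's $\cyclic{2}$ counterexample pushed forward along $\cyclic{2}\hookrightarrow\cyclic{n}$, instead of invoking the general fact that non-$\FS$-regularity is inherited by supergroups; both are fine.
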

\begin{proof}
    If $n$ is even, then $\cyclic{2}$ is a subgroup of $\cyclic{n}$ and thus it is sufficient to show that $\cyclic{2}$ is not $\FS$-regular. 
    As a counterexample to $\FS$-regularity in $\cyclic{2}$, it is enough to notice that
    \begin{equation*}
        \FS(\{0, 1\}) = \{0, 0, 1, 1\} = \FS(\{1, 1\}),
    \end{equation*}
    while $\{0,1\}\not\sim_0\{1, 1\}$ as multisets with values in $\cyclic{2}$.
    
    Let us now consider the case of $n$ odd.
    Since $n\not\in O_{\FS}$, there exists $k \in (\cyclic{n})^* \setminus \{\pm 2^j \bmod  n\}_{j \in \N}$.
    Moreover, let $d\defeq\varphi(n)$ be so that $n \mid 2^d - 1$.
    Consider the multisets $A,A'\in\M(\cyclic{n})$ defined as
    \begin{equation*}
        A \defeq \{2^0, \, 2^1, \, \dots, \, 2^{d - 1}\} \quad \textnormal{and} \quad
        A' \defeq k\cdot A = \{2^0k, \, 2^1k, \, \dots, \, 2^{d - 1}k\}.
    \end{equation*}
    The choice of $k$ implies that $A \cap A' = (-A) \cap A' = \varnothing$ and, in particular, $A \not\sim_0 A'$.

    We have that\footnote{The unions are taken over $\frac{2^d - 1}{n}$ copies of the same multiset and shall be interpreted in the multiset sense, so that the result is a multiset where each element
    appears $\frac{2^d - 1}{n}$ times.}
    \begin{align*}
        \FS(A) & = \{0, 1, \, 2, \, \dots, \, 2^d - 1\}
        = \{0\}\cup\bigcup_{i = 1}^{\frac{2^d - 1}{n}} \{0, \, 1, \, \dots, \, n - 1\} \\
        & = \{0\}\cup\bigcup_{i = 1}^{\frac{2^d - 1}{n}} k\cdot\{0, \, 1, \, \dots, \,  n - 1\}
        = \{k\cdot 0, \, k \cdot 1, \, k \cdot 2, \, \dots, \, k \cdot (2^d - 1)\} \\
        & = \FS(A').
    \end{align*}
\end{proof}

The proof that $\cyclic{n}$ is $\FS$-regular when $n\in O_{\FS}$ is more involved. The rest of this section is devoted to establish this result by reducing it to a statement about the units of the cyclotomic field $\Q(\omega_n)$. 

Before delving into the proof, let us present the relation between the problem at hand and the units of the cyclotomic field $\Q(\omega_n)$, to clarify the importance of \cref{def:Kn,def:phi}.

Given two multisets $A, A'\in\M(\cyclic{n})$, the condition $\FS(A)=\FS(A')$ is equivalent to the polynomial identity
\begin{equation*}
    \prod_{a\in A} (1+t^a) = \prod_{a'\in A'} (1+t^{a'}) \pmod{t^n-1},
\end{equation*}
which is equivalent to
\begin{equation*}
    \prod_{j=0}^{n-1} (1+\omega_d^j)^{\mu_A(j)-\mu_{A'}(j)} = 1,
\end{equation*}
for all divisors $d\mid n$ (because a polynomial is divisible by $t^n-1$ if and only if it has $\omega_d$ as root for all divisors $d\mid n$). Therefore, we are interested in the kernel of the map which takes a vector $x\in\Z^n$ and produces the tuple, indexed by the divisors $d\mid n$,
\begin{equation}\label{eq:local_phi_intro23}
    \Big(\prod_{j=0}^{n-1} (1+\omega_d^j)^{x_j}\Big)_{d\mid n}.
\end{equation}
Since this map is a homomorphism between abelian groups, studying its kernel is tightly linked to the study of its image. In fact, the crux of this section is the determination of the image of such map (see \cref{lem:phi-has-full-rank}).

The multiplicative group generated by $1+\omega_d^0, 1+\omega_d^1, \dots, 1+\omega_d^{n-1}$ is introduced in \cref{def:Kn}, while its rank is computed in \cref{lem:Kn-rank} (the assumption $n\in O_{\FS}$ is necessary to compute the rank). Then, in \cref{def:phi} we introduce the notation that allows studying the map mentioned in \cref{eq:local_phi_intro23} and we go on 
 to prove its \emph{moral} surjectivity (i.e., its image has full rank) in \cref{lem:phi-has-full-rank} (notice that we do not need $n\in O_{\FS}$, $n$ being odd suffices).
Finally, in \cref{prop:ofs-implies-fs-regular}, we join all the pieces to obtain the desired result.

Given an odd positive integer $n$, recall that, for $1\le j<n$, $1+\omega_n^j$ is a unit of $\Q(\omega_n)$ (see \cref{subsec:intro_cyclo}).

\begin{definition}\label{def:Kn}
    Given an odd positive integer $n\ge 1$, let $K_n$ be the multiplicative subgroup of $\C$ generated by $\{1 + \omega_n^j : 0 \le j < n\}$.
    Note that we include $1 + \omega_n^0 = 2$ among the generators.
\end{definition}

\begin{lemma}\label{lem:Kn-rank}
    If $n \ge 3$ and $n\in O_{\FS}$, it holds $\rk(K_n) = \frac{\varphi(n)}{2}$, where $\varphi$ denotes Euler's totient function.
    Moreover, it holds $\rk(K_1) = 1$.
\end{lemma}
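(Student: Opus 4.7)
The plan is to separate the non-unit generator $2 = 1 + \omega_n^0$ from the remaining unit generators and reduce the rank of $K_n$ to the rank of the cyclotomic units $C_n$ from \cref{thm:Cn-def}. The case $n = 1$ is immediate: $K_1 = \langle 2\rangle \subset \C^*$ is infinite cyclic, hence of rank $1$.

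For $n \geq 3$, let $K_n' \subseteq K_n$ denote the subgroup generated by $\{1 + \omega_n^j : 1 \leq j < n\}$; as recalled in \cref{subsec:intro_cyclo}, these elements are units of $\Z[\omega_n]$. Since $n$ is odd, $2$ is unramified in $\Z[\omega_n]$, so for any prime $\mathfrak{p}\mid 2$ the valuation $v_\mathfrak{p}$ sends every generator $1 + \omega_n^j$ with $j \geq 1$ to $0$ and sends $2$ to $1$. Restricting $v_\mathfrak{p}$ to $K_n$ yields a surjective homomorphism $K_n \to \Z$ whose kernel coincides with $K_n'$ (an element of $K_n$ is a unit of $\Z[\omega_n]$ if and only if its $2$-exponent in any product-representation is zero, which forces it to lie in $K_n'$). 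The resulting short exact sequence gives $\rk(K_n) = \rk(K_n') + 1$, reducing the problem to proving $\rk(K_n') = \tfrac{\varphi(n)}{2} - 1$.

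For the upper bound, the identity $1 + \omega_n^j = \tfrac{1 - \omega_n^{2j}}{1 - \omega_n^j}$, valid because $\gcd(2, n) = 1$, shows $K_n' \subseteq C_n$, hence $\rk(K_n') \leq \rk(C_n) = \tfrac{\varphi(n)}{2} - 1$ by \cref{cor:rk-Cn}. The main difficulty---and the only place where the hypothesis $n \in O_{\FS}$ is used---is the matching lower bound. The plan is to prove $C_n \subseteq K_n' \cdot \mu$, where $\mu \subset \C^*$ is the group of roots of unity; since a torsion factor does not contribute to the rank, this will give $\rk(C_n) \leq \rk(K_n' \cdot \mu) = \rk(K_n')$, closing the argument. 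To verify the inclusion, fix a generator $\xi = \tfrac{1 - \omega_n^{rs}}{1 - \omega_n^r}$ of $C_n$ with $0 < r < n$ and $\gcd(s, n) = 1$, and use $n \in O_{\FS}$ to produce $k \geq 0$ with $s \equiv \pm 2^k \pmod n$. The case $s \equiv 2^k$ is handled by the telescoping identity
\begin{equation*}
    \xi = \frac{1 - \omega_n^{2^k r}}{1 - \omega_n^r} = \prod_{i = 0}^{k - 1}(1 + \omega_n^{2^i r}) \in K_n',
\end{equation*}
while the case $s \equiv -2^k$ reduces to the previous one after multiplying by the root of unity $\tfrac{1 - \omega_n^{-2^k r}}{1 - \omega_n^{2^k r}} = -\omega_n^{-2^k r}$, placing $\xi$ in $\mu \cdot K_n'$.

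The main obstacle is precisely this final step: the generators of $K_n'$ a priori express only the ``powers-of-$2$'' part of $C_n$, and the definition of $O_{\FS}$ is tailored exactly so that this part exhausts $C_n$ up to roots of unity. Without the hypothesis $n \in O_{\FS}$ there is no way to reach residues $s$ outside $\{\pm 2^k \bmod n\}$, and the lower bound on $\rk(K_n')$ breaks down.
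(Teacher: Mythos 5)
Your proof is correct and follows essentially the same route as the paper: both split off the $\langle 2\rangle$ factor, bound $\rk(K_n')$ above via $K_n'\subseteq C_n$, and use $n\in O_{\FS}$ together with the telescoping identity $\frac{1-\omega^{2^k}}{1-\omega}=\prod_{i<k}(1+\omega^{2^i})$ to reverse the inclusion up to a torsion factor. The only cosmetic differences are that the paper obtains the cleaner inclusion $C_n\subseteq K_n'\cup(-K_n')$ by observing $\omega_n=\frac{1+\omega_n}{1+\omega_n^{-1}}\in K_n'$ (rather than your $C_n\subseteq K_n'\cdot\mu$), and that you phrase the splitting of $\langle 2\rangle$ via a $\mathfrak p$-adic valuation for $\mathfrak p\mid 2$ whereas the paper simply notes $2^{-1}\notin\Z[\omega_n]$; both yield the identical rank count.
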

\begin{proof}
    For $n = 1$, $K_n = \langle 2 \rangle \cong \Z$, which has rank $1$.
    
    Let us now consider $K_n$ for $n\ge 3$ and $n \in O_{\FS}$.
    Notice that all generators of $K_n$ apart from the element $2$ are units of $\Q(\omega_n)$, while the inverse of $2$ is not an algebraic integer. Therefore, one obtains $K_n\cong \langle 2\rangle \oplus \tilde K_n$, where $\tilde K_n\defeq \langle 1 + \omega_n^j : 1 \le j < n \rangle$.
    
    It remains to compute the rank of $\tilde K_n$. We have already observed that $\tilde K_n$ is a subgroup of $C_n$ (defined in the statement of \cref{thm:Cn-def}). Using that $n\in O_{\FS}$ we are going to prove that $C_n$ is a subgroup of $\tilde K_n\cup(-\tilde K_n)$.\footnote{One may check that $-1\not\in \tilde K_7$, while $-1\in C_7$. So it is not true in general that $C_n$ and $\tilde K_n$ coincide. On the other hand, for some values of $n$ (e.g., $n=3, 5, 9$) one has $-1\in\tilde K_n$.}
    
    To show that $C_n\subseteq \tilde K_n\cup(-\tilde K_n)$, it is sufficient to show that all generators of $C_n$ belong to $\tilde K_n$ or to $-\tilde K_n$. 
    Let us fix $s\ge 1$ coprime with $n$. Since $n\in O_{\FS}$, there exists $j\ge 0$ such that $\omega_n^{2^j}=\omega_n^s$ or $\omega_n^{2^j} = \omega_n^{-s}$.
    
    If $\omega_n^{2^j}=\omega_n^s$, then, for any $0<r<n$, we have
    \begin{equation*}
        \frac{1-\omega_n^{rs}}{1-\omega_n^r} = \frac{1-\omega_n^{2^jr}}{1-\omega_n^r}
        = \prod_{k=0}^{j-1} (1 + \omega_n^{2^kr}) \in \tilde K_n.
    \end{equation*}
    To handle the case $\omega_n^{2^j}=\omega_n^{-s}$, let us observe that $\omega_n = \frac{1 + \omega_n}{1 + \omega_n^{-1}} \in \tilde K_n$. Therefore, for any $0<r<n$, we have
    \begin{equation*}
        \frac{1 - \omega_n^{rs}}{1 - \omega_n^r}
        = -\omega_n^{rs} \frac{1 - \omega_n^{2^jr}}{1 - \omega_n^r} \in -\tilde K_n.
    \end{equation*}
    We have shown $\tilde K_n \subseteq C_n\subseteq \tilde K_n\cup (-\tilde K_n)$ and thus $\rk(\tilde K_n) = \rk(C_n) = \varphi(n)/2-1$ (recall \cref{cor:rk-Cn}). Hence we conclude $\rk(K_n) = \rk(\langle 2\rangle \oplus\tilde K_n) = 1 + \rk(\tilde K_n) = \varphi(n)/2$.
\end{proof}

\begin{definition}\label{def:phi}
Given a positive integer $n\ge 1$, for $0\le j<n$, let $e^{n}_j$ be the $j$-th canonical generator of $\Z^n = \bigoplus_{j = 0}^{n - 1} \Z$. The index $j$ of $e^{n}_j$ shall be interpreted modulo $n$, i.e., $e^{n}_j\defeq e^{n}_{j\bmod n}$, when $j\ge n$.

For a positive divisor $d$ of $n$, let $\pi^n_d:\Z^n\to\Z^d$ be the unique homomorphism such that $\pi^n_d(e^{n}_j) \defeq e^{d}_{j}$ ($=e^{d}_{j\bmod d})$ for all $0<j<n$.

Let $F_n:\Z^n\to K_n$ be the unique group homomorphism such that $F_n(e^{n}_j) = 1 + \omega_n^j$ for each $0 \le j < n$; or equivalently
\begin{equation*}
    F_n(x) = F_n(x_0, \, \dots, \, x_{n - 1}) \defeq \prod_{j = 0}^{n - 1} (1 + \omega_n^j)^{x_j}.
\end{equation*}
\end{definition}

\begin{lemma}\label{lem:easy-linear-algebra}
    Let $\mathbb F$ be a field and let $V$ be a $\mathbb F$-vector space. Given a subset $S\subseteq V$, we denote with $\langle S\rangle_\F$ the subspace generated by the elements of $S$.
    
    Given $k$ vectors $v_1, v_2, \dots, v_k\in V$, for any $\lambda\in\mathbb F$ which is not a root of unity (i.e., $\lambda^q\not=1$ for all positive integers $q\ge 1$) and for any function $\sigma:\{1, 2,\dots, k\} \to \{1, 2, \dots, k\}$, we have
    \begin{equation*}
        \langle v_j - \lambda v_{\sigma(j)}:\, 1\le j\le k\rangle_\F
        =
        \langle v_j:\, 1\le j\le k\rangle_\F.
    \end{equation*}
\end{lemma}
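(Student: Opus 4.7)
The plan is to establish the two containments separately. The inclusion $\subseteq$ is immediate: every generator on the left is a linear combination of the $v_j$'s. For the reverse inclusion, I would denote by $W$ the left-hand subspace and pass to the quotient $V/W$, where the task reduces to showing that the image $\bar v_j$ of each $v_j$ vanishes.

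By construction, the relations $\bar v_j = \lambda \bar v_{\sigma(j)}$ hold in $V/W$ for every $j$. Iterating gives $\bar v_j = \lambda^m \bar v_{\sigma^m(j)}$ for every $m \ge 0$. Since $\sigma$ is a self-map of the finite set $\{1,\dots,k\}$, the orbit of each $j$ under $\sigma$ is eventually periodic: there exist $a \ge 0$ and $b \ge 1$ such that $\sigma^{a+b}(j) = \sigma^a(j)$. Setting $i \defeq \sigma^a(j)$, applying the iterated relation with exponents $a$ and $a+b$ yields $\bar v_j = \lambda^a \bar v_i$ and $\bar v_j = \lambda^{a+b} \bar v_i$, whence $(1-\lambda^b)\bar v_i = 0$.

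The hypothesis that $\lambda$ is not a root of unity forces $1-\lambda^b \ne 0$, so $\bar v_i = 0$, and therefore $\bar v_j = \lambda^a \bar v_i = 0$. Since $j$ was arbitrary, every $\bar v_j$ vanishes in $V/W$, which gives $\langle v_j : 1 \le j \le k\rangle_{\F} \subseteq W$ and finishes the argument.

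The only subtle point, which is what I would flag as the main conceptual obstacle, is that $\sigma$ is an arbitrary self-map rather than a permutation, so one cannot immediately iterate into a cycle starting from $j$ itself; instead one must follow the orbit until it enters its eventual cycle at $\sigma^a(j)$, and it is there that the non-root-of-unity hypothesis is used.
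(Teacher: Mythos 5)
Your proof is correct, and it takes a route that differs from the paper's. The paper proceeds by induction on $k$, splitting into two cases: if $\sigma$ is not surjective it discards an index outside the image of $\sigma$ and applies the inductive hypothesis; if $\sigma$ is surjective it is a permutation of some period $q$, and the telescoping identity $\sum_{i=0}^{q-1}\lambda^i\bigl(v_{\sigma^i(\ell)} - \lambda v_{\sigma^{i+1}(\ell)}\bigr) = (1-\lambda^q)v_\ell$ does the job. Your version dispenses with the induction entirely: passing to the quotient $V/W$ reduces the whole statement to the single observation that the orbit of $j$ under $\sigma$ is eventually periodic, and the non-root-of-unity hypothesis then kills the class $\bar v_i$ of the vector sitting on the cycle, hence also $\bar v_j$. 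Both arguments rest on the same combinatorial fact (eventual periodicity of $\sigma$-orbits); yours is arguably more direct, the paper's induction being a way of unrolling the orbit's tail one step at a time until it reaches the cycle.

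One small technical wrinkle: from $\lambda^a \bar v_i = \lambda^{a+b}\bar v_i$ you deduce $(1-\lambda^b)\bar v_i = 0$, which implicitly cancels a factor of $\lambda^a$ and so requires $\lambda\neq 0$. The paper's gloss of ``not a root of unity'' as $\lambda^q\neq 1$ for all $q\ge 1$ does not a priori exclude $\lambda=0$ (although that case makes the lemma vacuously true). A cleaner way to reach the same conclusion with no division: since $i=\sigma^a(j)$ lies on the cycle, $\sigma^b(i)=i$, so applying the iterated relation to $i$ itself gives $\bar v_i=\lambda^b\bar v_i$ directly, whence $(1-\lambda^b)\bar v_i=0$ and the rest of your argument proceeds unchanged.
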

\begin{proof}
    We prove the statement by induction on $k$.
    For $k=0$ there is nothing to prove.
    
    If $\sigma$ is not surjective then we can assume without loss of generality that $\sigma(j)\not=k$ for all $1\le j\le k$. Hence, we can apply the inductive hypothesis and obtain
    \begin{equation*}
        \langle v_j - \lambda v_{\sigma(j)}:\, 1\le j\le k-1\rangle_\F
        =
        \langle v_j:\, 1\le j\le k-1\rangle_\F.
    \end{equation*}
    Since $v_{\sigma(k)} \in \langle v_j:\, 1\le j\le k-1\rangle_\F$, we obtain
    \begin{equation*}
        \langle v_j - \lambda v_{\sigma(j)}:\, 1\le j\le k\rangle_\F
        =
        \langle v_1, v_2, \dots, v_{k-1}, v_k - \lambda v_{\sigma(n)}\rangle_\F
        =
        \langle v_j:\, 1\le j\le k\rangle_\F,
    \end{equation*}
    which is what we sought.
    
    If $\sigma$ is surjective, then it must be a permutation. In particular there exists $q\ge 1$ such that $\sigma^q(j) = j$ for all $1\le j\le k$. Thus, for any $1\le \ell \le k$, we have the telescopic sum
    \begin{align*}
        \sum_{i=0}^{q-1} \lambda^i\big(v_{\sigma^i(\ell)} - \lambda v_{\sigma(\sigma^i(\ell))}\big) = (1-\lambda^q)v_{\ell} .
    \end{align*}
    Since $1-\lambda^q\not=0$ by assumption, we deduce that $v_{\ell}\in \langle v_j - \lambda v_{\sigma(j)}:\, 1\le j\le k\rangle_\F$ for all $1 \le \ell\le k$, which implies the statement.
\end{proof}

\begin{lemma}\label{lem:phi-has-full-rank}
    For any odd positive integer $n$, the image of the map $(F_d\circ \pi^n_d)_{d\mid n}:\Z^n\to\oplus_{d\mid n} K_d$ is a finite-index subgroup of  $\oplus_{d\mid n}K_d$.
\end{lemma}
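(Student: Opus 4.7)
The plan is to tensor everything with $\Q$ and prove the equivalent statement that $\Phi \otimes \Q : \Q^n \to \bigoplus_{d \mid n}(K_d \otimes \Q)$ is surjective, where $\Phi \defeq (F_d \circ \pi^n_d)_{d\mid n}$. Since $\bigoplus_{d\mid n}K_d$ is finitely generated, the original finite-index claim is equivalent to this $\Q$-linear surjectivity by rank-nullity. I would then proceed by strong induction on the odd integer $n$; the base case $n = 1$ is immediate since $\Phi$ becomes the surjection $\Z \to K_1 = \langle 2 \rangle$, $x \mapsto 2^x$.

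For the inductive step with $n \geq 3$ odd, I fix a prime divisor $p \mid n$ and set $n' \defeq n/p$. The divisors of $n$ split into ``lower'' divisors (those dividing $n'$) and ``top'' divisors (those not dividing $n'$, equivalently those with maximal $p$-adic valuation). For the lower divisors, the map $F_d \circ \pi^n_d$ factors through the surjection $\pi^n_{n'}$, so the inductive hypothesis applied to $n'$ yields surjectivity of the lower-divisor component of $\Phi \otimes \Q$. It then remains to show that, restricted to the kernel of the lower-divisor component, the projection onto the top-divisor direct summand $\bigoplus_{d \nmid n'}(K_d \otimes \Q)$ is still surjective. For this I invoke the distribution relations of \cref{prop:distribution-relations}: the identity $\prod_{k=0}^{p-1}(1+\omega_n^{j+kn/p}) = 1+\omega_n^{jp}$, together with its analogues at smaller divisor levels, produces nontrivial relations in the various $K_d$, and combining these carefully constructs elements of $\Q^n$ that lie in the kernel of the lower-divisor map while projecting to a spanning set of the top-divisor side. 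The auxiliary \cref{lem:easy-linear-algebra} would be applied, with $\lambda = 2$ (an element naturally present in $K_1$ and not a root of unity) and $\sigma$ coming from multiplication by $2$ on the index set, in order to untangle spans of shifted generators that appear when the relations are iterated.

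The main obstacle is precisely the top-divisor case, especially when $n$ has more than one prime factor: the distribution relations couple the different divisor levels in a nontrivial way, and orchestrating them together with \cref{lem:easy-linear-algebra} to produce a full spanning set of the top-divisor quotient of the codomain is a delicate combinatorial task. This bookkeeping constitutes the technical heart of the argument, which is why the authors describe this step as the most demanding of the paper. The oddness of $n$ is used throughout: it guarantees that each $1+\omega_d^j$ is a unit of $\Q(\omega_d)$ for $d \geq 3$ (so that $K_d$ can be studied inside the cyclotomic unit group), and it ensures the prime $p$ appearing in the distribution relations is odd, which is needed for the identities to apply cleanly.
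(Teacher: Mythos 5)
Your high-level plan --- tensor with $\Q$, reduce to a $\Q$-linear surjectivity, feed the distribution relations into \cref{lem:easy-linear-algebra} --- matches the skeleton of the paper's argument, and you correctly identify the distribution relations as the engine. But you leave the technical heart as a black box (``a delicate combinatorial task''), and the one concrete parameter you do specify for \cref{lem:easy-linear-algebra} is wrong. The paper applies that lemma with $\lambda = 1/p$ and $\sigma(j) = jp \bmod n$, not $\lambda = 2$ and multiplication by $2$: for the fixed prime $p$, the distribution relation $\prod_{k=0}^{p-1}(1+\omega_d^{j+kd/p}) = 1+\omega_d^{jp}$, projected into the quotients $\Q^d/D_d$ and divided through by $p$, produces relations of the shape $u_{S',j} - \tfrac1p u_{S',jp} \in \im(\Psi_n\otimes\Q)$, and untangling these requires exactly the scalar $1/p$ and the shift $j \mapsto jp$. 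The number $2$ (as $1+\omega_1^0 \in K_1$) plays no role at this step; it enters the paper only through $\rk(K_1)=1$ in the separate computation of $\rk(K_n)$, which is \cref{lem:Kn-rank}, not this lemma.

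Organizationally you also diverge from the paper. You propose strong induction on $n$ via $n' = n/p$; the paper keeps $n$ fixed, introduces the intermediate quotients $\Z^d/D_d$ (with $D_d$ the $\Z$-span of the vectors $v^d_{p,j}$ encoding the distribution relations, so that surjectivity onto $\bigoplus_d \Z^d/D_d$ forces surjectivity onto $\bigoplus_d K_d$) and a notion of \emph{solvable} subset of $\{d : d\mid n\}$, refining from the full divisor set down to singletons one prime at a time. Your inductive step --- showing the kernel of the lower-divisor component still surjects onto the top-divisor summand $\bigoplus_{\upsilon_p(d)=\upsilon_p(n)}(K_d\otimes\Q)$ --- is not a single-$K_d$ problem unless $n$ is a prime power, so you would effectively have to reproduce the paper's solvable-set bookkeeping inside the inductive step anyway. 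Until that bookkeeping is actually carried out, and with the correct $\lambda$ and $\sigma$, the proposal has a genuine gap at precisely the step you yourself flag as the crux.
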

\begin{proof}
    Let us fix a divisor $d$ of $n$. We are going to identify some elements of the kernel of $F_d$, which is equivalent to producing nontrivial relations in $K_d$. For any divisor $p$ of $d$ and any $0\le j<d/p$, let
    \begin{equation*}
        v^d_{p, j} \defeq e^d_{jp} - \sum_{k=0}^{p-1} e^d_{j+kd/p}.
    \end{equation*}
    Thanks to \cref{prop:distribution-relations}, we know that $F_d(v^d_{p,j})=1$ for all prime divisors $p$ of $d$ and all $0<j<d/p$.
    Therefore, we have identified the subspace
    \begin{equation*}
        \Z^d \supseteq D_d \defeq \langle v^d_{p, j}\rangle_{p\mid d\text{ prime},\, 0\le j < d/p} 
    \end{equation*}
    of the kernel of $F_d$. Let us identify with $[\emptyparam]_{D_d}:\Z^d\to\Z^d/D_d$ the projection to the quotient. 
    
    We claim that $\Psi_n\defeq([\pi^n_d]_{D_d})_{d\mid n}:\Z^n\to \bigoplus_{d \mid n} \Z^d/D_d$ has full rank (i.e., the rank of its image coincides with the rank of its codomain).
    This claim implies the desired result since $F_d$ is surjective for all $d$. 
    
    In order to show that $\Psi_n$ has full rank we consider its tensorization with $\Q$ and show that it is surjective as a linear map between $\Q$-vector spaces.
    With a mild abuse of notation, we keep denoting with $(e^d_j)_{0\le j<d}$ the canonical basis of $\Q^d$ and we keep denoting with $D_d$ the $\Q$-subspace generated by $\{v^d_{p, j}\}_{p\mid d\text{ prime},\, 0\le j\le d/p}$.
    
    Thanks to the basic properties of the tensor product, we have $(\Z^d/D_d)\otimes\Q = \Q^d/ D_d$ and the tensorization $\Psi_n\otimes \Q: \Q^n\to \bigoplus_{d\mid n} \Q^d/ D_d$ satisfies $(\Psi_n\otimes\Q)(e^n_j) = ([e^d_j]_{D_d})_{d\mid n} \in \bigoplus_{d \mid n} \Q^d/D_d$ for all $0\le j < n$.

    The following commutative diagram shall clarify all the steps of the proof up to now.
    \begin{equation*}
    \begin{tikzcd}[row sep=40pt, column sep=50pt]
        \Q^n
        \arrow[rr, twoheadrightarrow, 
        "{(\Psi_n \, \otimes \, \Q)(e^n_j) \, = \, ([e^d_j]_{D_d})_{d\mid n}}"]
        &
        &
        \bigoplus_{d\mid n} \Q^d/D_d
        \\
        \Z^n 
        \arrow[r, "(\pi^n_d)_{d\mid n}"] 
        \arrow[rr, bend left=35, "\Psi_n"]
        \arrow[u, hookrightarrow, "\emptyparam\otimes\Q"]
        &
        \bigoplus_{d\mid n} \Z^d
        \arrow[r, "{([\emptyparam]_{D_d})_{d\mid n}}"]
        \arrow[rd, swap, twoheadrightarrow, "(F_d)_{d\mid n}"]
        & \bigoplus_{d \mid n} \Z^d/D_d 
        \arrow[u, swap, "\emptyparam\otimes\Q"]
        \arrow[d, dashed, twoheadrightarrow] \\
        & &\bigoplus_{d \mid n} K_d
    \end{tikzcd}.
    \end{equation*}
    
    To prove the surjectivity of the linear map $\Psi_n\otimes \Q:\Q^n\to \bigoplus_{d\mid n} \Q^d/ D_d$ we show explicitly that the canonical generators of the codomain belong to the image of the map.
    
    Given a subset $S\subseteq\{d\ge 1:\, d\mid n\}$ and an index $0\le j<n$, let $u_{S, j}=(u_{S,j}^d)_{d\mid n}\in\bigoplus_{d\mid n} \Q^d/D_d$ be the element defined by
    \begin{equation*}
        \Q^d/D_d \ni 
        u_{S, j}^d \defeq
        \begin{cases}
            0 & \textnormal{if } d \not\in S, \\
            [e^d_j]_{D_d} & \textnormal{if } d \in S.
        \end{cases}
    \end{equation*}
    The index $j$ of $u_{S, j}$ should be interpreted modulo $n$ (e.g.\ $u_{S, n} = u_{S, 0}$).

    Notice that $(u_{\{d\}, j})_{d\mid n, \, 0\le j<n}$ is a set of generators of $\bigoplus_{d\mid n} \Q^d/D_d$.
    Moreover, it holds $(\Psi_n\otimes\Q)(e^n_j)=u_{\{d\ge 1:\, d\mid n\}, \, j}$.
    
    We say that a set $S$ is \emph{solvable} if $u_{S, j}$ belongs to the image of $\Psi_n\otimes\Q$ for all $0\le j<n$. 
    Thanks to the previous observations, we know that $\{d\ge 1:\, d\mid n\}$ is solvable and that the surjectivity of $\Psi_n\otimes\Q$ is equivalent to the fact that all singletons $\{d\}$ are solvable.
    Notice that if $S\subseteq T\subseteq\{d\ge 1:\, d\mid n\}$ is solvable, then also $T\setminus S$ is solvable. Indeed, if $(\Psi_n\otimes\Q)(x) = u_{S, j}$ and $(\Psi_n\otimes\Q)(y)=u_{T, j}$, then $(\Psi_n\otimes\Q)(y-x) = u_{T\setminus S, \, j}$. 
    Our main tool to show the solvability of a set is the following sub-lemma.
    
    \begin{lemma}\label{lem:top-layer-is-solvable}
        Let $S\subseteq\{d\ge 1:\, d\mid n\}$ be a solvable subset and let $p\mid n$ be a prime number. Let us define\footnote{Here $\upsilon_p(x)$ denotes the $p$-adic valuation of a nonzero integer $x$, i.e.\ the maximum exponent $h \ge 0$ such that $p^h$ divides $x$.} $\upsilon_p(S)\defeq \max_{d\in S}\upsilon_p(d)$ as the maximal $p$-adic valuation of an element of $S$.
        Then, the subset $\{d\in S: \upsilon_p(d) = \upsilon_p(S)\}$ is also solvable.
    \end{lemma}
    \begin{proof}
        Let $S'\defeq \{d\in S: \upsilon_p(d) = \upsilon_p(S)\}$.
        Let $m$ be the minimum common multiple of the elements of $S$. Notice that $\upsilon_p(m) = \upsilon_p(S)$.
        
        If $\upsilon_p(S) = 0$, then $S'=S$ and the statement is obvious. From now on we assume that $\upsilon_p(S) > 0$.
        
        We claim that, for any $0\le j < n$, it holds             
        \begin{equation}\label{eq:crucial-identity}
            u_{S, j} - \frac{1}{p} \sum_{k = 0}^{p - 1} u_{S, j + km/p} = 
            u_{S', j} - \frac{1}{p} u_{S', jp}.
        \end{equation}
        We prove \cref{eq:crucial-identity} by looking at the projections of both sides onto $\Q^d/D_d$ and considering various cases depending on the divisor $d$.
        \begin{itemize}
            \item If $d\not\in S$, then $d\not\in S'$ (since $S'\subseteq S$) and thus we have
            \begin{equation*}
                u^d_{S, j} - \frac{1}{p} \sum_{k = 0}^{p - 1} u^d_{S, j + km/p} = 
                0 =
                u^d_{S', j} - \frac{1}{p} u^d_{S', jp}.
            \end{equation*}
            \item If $d\in S$ and $\upsilon_p(d) < \upsilon_p(S)$, then $d\mid\frac mp$ and therefore $u^d_{S, j+km/p} = [e^d_{j+km/p}]_{D_d} = [e^d_j]_{D_d} = u^d_{S, j}$. Since $\upsilon_p(d) < \upsilon_p(S)$ implies that $d\not\in S'$, we deduce
            \begin{equation*}
                u^d_{S, j} - \frac{1}{p} \sum_{k = 0}^{p - 1} u^d_{S, j + km/p} = u^d_{S, j} - \frac{1}{p} \sum_{k = 0}^{p - 1} u^d_{S, j}
                = 0 =
                u^d_{S', j} - \frac{1}{p} u^d_{S', jp}.
            \end{equation*}
            \item If $d\in S$ and $\upsilon_p(d) = \upsilon_p(S)$, then it holds
            \begin{equation}\label{eq:local_sets}
                \Big\{0,\, \frac mp\bmod d,\, 2\frac mp\bmod d, \dots,\, (p-1)\frac mp\bmod d\Big\}
                =
                \Big\{0,\, \frac dp,\, 2\frac dp, \dots,\, (p-1)\frac dp\Big\}.
            \end{equation}
            To prove the latter identity, notice that for any $0\le k<p$, we have
            \begin{equation*}
                \Big(k\frac mp \bmod d\Big) = 
                \Big(k\frac md \bmod p\Big)\frac dp
            \end{equation*}
            and therefore the identity between sets follows from the fact that $m/d$ is not divisible by $p$.
            
            Exploiting \cref{eq:local_sets} and recalling that $v^d_{p, j}\in D_d$, we obtain
             \begin{align*}
                u^d_{S, j} - \frac{1}{p} \sum_{k = 0}^{p - 1} u^d_{S, \, j + km/p} &= 
                \Big[e^d_j - \frac 1p \sum_{k = 0}^{p - 1} e^d_{j + km/p}\Big]_{D_d} = 
                \Big[e^d_j - \frac 1p \sum_{k = 0}^{p - 1} e^d_{j + kd/p}\Big]_{D_d} \\
                &=
                \Big[e^d_j - \frac 1p (e^d_{jp} - v^d_{p, j})\Big]_{D_d}
                =
                \Big[e^d_j - \frac 1p e^d_{jp}\Big]_{D_d}
                \\
                &=
                u^d_{S', j} - \frac{1}{p} u^d_{S', jp},
            \end{align*}
            where in the last steps we used that $d\in S'$ (which is equivalent to the assumptions $d\in S$ and $\upsilon_p(d) = \upsilon_p(S)$).
        \end{itemize}
        Since we have covered all possible cases, \cref{eq:crucial-identity} is proven.
        
        The set $S$ is solvable, therefore the left-hand side of \cref{eq:crucial-identity} belongs to the image of $\Psi_n\otimes\Q$, and thus also $u_{S', j} - \frac1p u_{S', jp}$ belongs to $\im(\Psi_n\otimes\Q)$ for all $0\le j<n$. \Cref{lem:easy-linear-algebra}, applied with $v_j\defeq u_{S', j}$, $\lambda\defeq 1/p$, and $\sigma(j)\defeq (jp\bmod n)$, guarantees that also $u_{S', j}$ belongs to the image of $\Psi_n\otimes\Q$ for all $0\le j < n$, which proves that $S'$ is solvable as desired.
    \end{proof}
    As a simple consequence of \cref{lem:top-layer-is-solvable}, we claim that if $S$ is solvable, then, for any prime divisor $p$ of $n$ and for any $0\le h \le \upsilon_p(n)$, we have that $\{s\in S: \upsilon_p(s) = h\}$ is also solvable.
    Let us prove it by induction on $h$, starting from $h = \upsilon_p(n)$ and going backward to $h = 0$. 

    If $\{s\in S: \upsilon_p(s)=\upsilon_p(n)\}$ is empty, then it is solvable; otherwise we can apply \cref{lem:top-layer-is-solvable} and obtain again that it is solvable. 
    Now, we assume that $\{s\in S: \upsilon_p(s) = h'\}$ is solvable for $h' > h$. 
    Then, since the difference of solvable sets is solvable, we deduce that $\tilde S\defeq \{s\in S: \upsilon_p(s)\le h\}$ is solvable. If $\{s\in S: \upsilon_p(s) = h\}$ is empty, then it is solvable; otherwise we can apply \cref{lem:top-layer-is-solvable} on the set $\tilde S$ and obtain again that $\{s\in S:\upsilon_p(s) = h\}$ is solvable as desired.
    
    We can now conclude by showing that singletons $\{d\}$ are solvable for each $d\mid n$. 
    This follows directly from the fact that $\{d\ge 1: d\mid n\}$ is solvable and that if $S$ is solvable then $\{s\in S: \upsilon_p(s) = h\}$ is solvable for all prime divisors $p\mid n$ and all $h \ge 0$.
\end{proof}

\begin{proposition}\label{prop:ofs-implies-fs-regular}
    For any $n\in O_{\FS}$, the group $\Z/n\Z$ is $\FS$-regular.
\end{proposition}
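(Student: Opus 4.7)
The plan is to translate the equality $\FS(A) = \FS(A')$ into a condition on the kernel of the homomorphism $\Phi_n \defeq (F_d \circ \pi^n_d)_{d \mid n}\colon \Z^n \to \bigoplus_{d \mid n} K_d$, and then exploit the rank information already established in \cref{lem:phi-has-full-rank,lem:Kn-rank}. For $A, A' \in \M(\cyclic{n})$, the identity $\FS(A) = \FS(A')$ is equivalent to $\prod_{a \in A}(1+t^a) \equiv \prod_{a' \in A'}(1+t^{a'}) \pmod{t^n - 1}$, and hence, since a polynomial is divisible by $t^n - 1$ if and only if it vanishes at every $n$-th root of unity, to the system of identities $\prod_{j=0}^{n-1}(1+\omega_d^j)^{\mu_A(j) - \mu_{A'}(j)} = 1$ as $d$ ranges over the divisors of $n$. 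By the very definition of $\Phi_n$, this system is exactly $\mu_A - \mu_{A'} \in \ker \Phi_n$.

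Next, I would compute $\rk \ker \Phi_n = \tfrac{n-1}{2}$. \Cref{lem:phi-has-full-rank} says that $\im \Phi_n$ has full rank in $\bigoplus_{d \mid n} K_d$. Since $n \in O_{\FS}$ is odd, each of its divisors $d$ is odd and, by \cref{prop:OFS-stability-for-divisors}, also lies in $O_{\FS}$; hence \cref{lem:Kn-rank} gives $\rk K_1 = 1$ and $\rk K_d = \varphi(d)/2$ for each divisor $d \ge 3$ of $n$. Combining these with the identity $\sum_{d \mid n}\varphi(d) = n$ (and noting that the odd parity of $n$ removes any $d = 2$ contribution) yields $\rk \im \Phi_n = 1 + \tfrac{n-1}{2} = \tfrac{n+1}{2}$, whence $\rk \ker \Phi_n = \tfrac{n-1}{2}$.

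To upgrade ``$\mu_A - \mu_{A'} \in \ker \Phi_n$'' to the desired ``$A \sim_0 A'$'', I would introduce the subgroup $L \subseteq \Z^n$ of differences $\mu_A - \mu_{A'}$ with $A \sim_0 A'$ (which is a subgroup by the symmetry and transitivity of $\sim_0$, together with the fact that disjoint union of chains produces a chain for the sum). The inclusion $L \subseteq \ker \Phi_n$ is immediate from \cref{lem:basic-properties}-\cref{it:lem:basic-sim0-implies-equal} and the reformulation above. The target is to prove $L = \ker \Phi_n$, which I would approach by exhibiting $\tfrac{n-1}{2}$ linearly independent $\sim_0$-differences (for instance, those induced by the zero-sum triples $B_j = \{j, j, -2j\}$ for $1 \le j \le \tfrac{n-1}{2}$) to match the rank of $\ker \Phi_n$, and then arguing that the quotient $\ker \Phi_n / L$ has no torsion, so that the rank equality forces the lattice equality.

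The main obstacle I anticipate is precisely this last identification: the rank count alone only bounds $L$ as a finite-index sublattice of $\ker \Phi_n$, so ruling out nontrivial torsion in the quotient is the essential step; moreover, realizing a given $\delta \in \ker \Phi_n$ as an honest chain of zero-sum sign-flip moves requires care to keep intermediate multiplicities nonnegative, which explains why the proof is, as the authors emphasize, inherently nonconstructive. This is exactly the point at which the hypothesis $n \in O_{\FS}$---already vital in \cref{lem:Kn-rank}---bears its full weight, since without it the rank of $K_n$ would be strictly smaller and the whole dimension match would collapse.
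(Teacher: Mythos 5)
Your reduction to the kernel of $\Phi_n \defeq (F_d\circ\pi^n_d)_{d\mid n}$ and the rank computation $\rk\ker\Phi_n = \tfrac{n-1}{2}$ are exactly the paper's, and correct. Where you diverge is the final upgrade, and there the proposal has a genuine gap that you yourself flag but do not close: you want to prove that the subgroup $L$ of differences coming from $\sim_0$-moves \emph{equals} $\ker\Phi_n$, and you observe that the rank match only gives $L$ as a finite-index sublattice, so that ruling out torsion in $\ker\Phi_n/L$ is the crux. You offer no argument for torsion-freeness, and none is in easy reach --- indeed the natural description of $L$ involves a genuine congruence condition modulo $n$ (namely $\sum_{j=1}^{(n-1)/2} j\,x_j \equiv 0 \pmod n$, on top of $x_0=0$ and $x_j+x_{n-j}=0$), and this congruence does not pass automatically from a multiple $\alpha x \in L$ back to $x$.

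The paper sidesteps exactly this obstacle: from $\rk L = \rk\ker\Phi_n$ it only extracts the \emph{torsion-free} part of the constraints. Every $x\in\ker\Phi_n$ has some multiple $\alpha x$ in $L$, hence $x$ itself already satisfies the purely linear conditions $x_0=0$ and $x_j + x_{n-j} = 0$ (these are $\Z$-linear equalities, so they descend from $\alpha x$ to $x$), while nothing is claimed about the congruence. This only gives $A\sim A'$, not $A\sim_0 A'$. The remaining step is supplied not by lattice theory but by \cref{lem:basic-properties}-\cref{it:lem:basic-sim-and-fs-implies-sim0}: since $\FS(A)=\FS(A')$ and $\cyclic{n}$ has no elements of order $2$ (because $n$ is odd), $A\sim A'$ already forces $A\sim_0 A'$. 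You should replace your ``rule out torsion'' step with this two-stage argument: first deduce $A\sim A'$ from the torsion-free consequences of the rank count, then invoke \cref{lem:basic-properties}-\cref{it:lem:basic-sim-and-fs-implies-sim0} to conclude. As a side remark, the nonconstructivity lives in \cref{lem:phi-has-full-rank}, not in the final bookkeeping; and the hypothesis $n\in O_{\FS}$ is used only through \cref{lem:Kn-rank}, as you correctly observe.
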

\begin{proof}
    Let $A, \, A' \in \M(\cyclic{n})$ be two multisets such that $\FS(A) = \FS(A')$; we shall prove that $A\sim_0 A'$.
    
    By definition of the map $\FS$, it holds the polynomial identity in $\Z[t]/(t^n-1)$
    \begin{equation*}
        \sum_{j=0}^{n-1}\mu_{\FS(A)}(j) t^j 
        \equiv \sum_{s\in\FS(A)} t^s
        \equiv \prod_{a\in A} (1 + t^a)
        \equiv \prod_{j=0}^{n-1} (1+t^j)^{\mu_A(j)}
        \pmod{t^n-1} ,
    \end{equation*}
    Thus the condition $\FS(A)=\FS(A')$ is equivalent to
    \begin{equation*}
        \prod_{j=0}^{n-1} (1+t^j)^{\mu_A(j)} 
        \equiv \prod_{j=0}^{n-1} (1+t^j)^{\mu_{A'}(j)}
        \pmod{t^n-1}.
    \end{equation*}
    For any divisor $d\mid n$, $\omega_d$ is a root of $t^n-1$ and therefore the latter identity implies
    \begin{equation*}
         \prod_{j=0}^{n-1} (1+\omega_d^j)^{\mu_A(j)} 
        = \prod_{j=0}^{n-1} (1+\omega_d^j)^{\mu_{A'}(j)}
    \end{equation*}
    which, recalling \cref{def:phi}, is equivalent to
    \begin{equation*}
        F_d\Big(\pi^n_d\big((\mu_A(j)-\mu_{A'}(j))_{0\le j<n}\big)\Big) = 1.
    \end{equation*}
    We have just shown that the vector $(\mu_A(j)-\mu_{A'}(j))_{0\le j<n}\in\Z^n$ belongs to the kernel of the map $(F_d\circ \pi^n_d)_{d\mid n}:\Z^n\to\oplus_{d\mid n}K_d$. Let us now switch our attention to the study of such kernel.
    
    \begin{figure}[htb]
    \begin{equation*}
        \begin{tikzcd}[row sep=large, column sep=55pt]
            \M(\cyclic{n}) 
            \arrow[rr, hookrightarrow, "{A\mapsto (\mu_A(j))_{0\le j<n}}"] 
            \arrow[d, "\FS"]
            &
            &
            \Z^n
            \arrow[d, "{x \mapsto \prod_{j=0}^{n-1} (1+t^j)^{x_j}}"] 
            \arrow[r, rightarrow, "{(F_d\circ \pi_{n, d})_{d\mid n}}"]
            &
            \oplus_{d\mid n} K_d
            \arrow[d, hookrightarrow]
            \\
            \M(\cyclic{n}) 
            \arrow[r, hookrightarrow]
            &
            \Z^n
            \arrow[r, "\cong", "{x\mapsto \sum_{j=0}^{n-1}x_jt^j}"']
            &
            \frac{\Z[t]}{(t^n-1)}
            \arrow[r, "\cong", "{[q]\mapsto (q(\omega_d))_{d\mid n}}"']
            &
            \oplus_{d\mid n} \Z[\omega_d]
        \end{tikzcd}.
    \end{equation*} 
    \caption{A commutative diagram depicting the relation, explained at the beginning of the proof of \cref{prop:ofs-implies-fs-regular}, between the map $\FS$ and the map $(F_d\circ \pi^n_d)_{d\mid n}$.}
    \end{figure}
    
    Due to basic properties of the rank (see \cref{subsec:abelian-groups}), we have
    \begin{align*}
        \rk\big(\ker((F_d\circ \pi^n_d)_{d\mid n})\big)
        &=
        n -
        \rk\big(\im((F_d\circ \pi^n_d)_{d\mid n})\big)
        =
        n -
        \rk\Big(\bigoplus_{d\mid n}K_d\Big)
        \\
        &=n -
        \sum_{d\mid n} \rk(K_d)
        =
        n-1-\sum_{1<d\mid n}\frac{\varphi(d)}2
        =
        \frac{n-1}2,
    \end{align*}
    where we have used \cref{lem:phi-has-full-rank} and \cref{lem:Kn-rank}.
    
    Let us now exhibit a subgroup $L_n$ of $\Z^n$ which is included in the kernel of $(F_d\circ \pi^n_d)_{d\mid n}$ (in hindsight, it coincides with such kernel).
    Let $L_n\subseteq\Z^n$ be the subgroup\footnote{Notice that $L_n$ is the subgroup generated by the vectors $(\mu_B(j) - \mu_{B'}(j))_{0 \le j < n}$ for any two multisets $B\sim_0 B'$.}
    \begin{equation*}
        L_n\defeq \left\{x\in\Z^n
        :\, 
        \begin{aligned}
        &x_0=0,\, \\
        &x_j+x_{n-j}=0\text{ for all $1\le j\le \frac{n-1}2$}, \, \\
        &\sum_{j=1}^{\frac{n-1}2} j\cdot x_j\text{ is divisible by $n$}
        \end{aligned}
        \right\} .
    \end{equation*}
    For any $d\mid n$ and $x\in L_n$, we have
    \begin{align*}
        F_d(\pi^n_d(x)) 
        &= 
        \prod_{j = 0}^{n-1} (1+\omega_d^j)^{x_j} = 
        \prod_{j=1}^{(n-1)/2} (1+\omega_d^j)^{x_j}(1+\omega_d^{-j})^{-x_j}
        \prod_{j=1}^{(n-1)/2} \omega_d^{j\cdot x_j}
        \\
        &=
        \omega_d^{\sum_{j=1}^{(n-1)/2}j\cdot x_j} = 1,
    \end{align*}
    and this proves that $L_n$ is a subgroup of the kernel of $(F_d\circ \pi^n_d)_{d\mid n}$.
    
    Notice that $\rk(L_n)=\frac{n-1}2 =\rk\big(\ker((F_d\circ \pi^n_d)_{d\mid n})\big)$, so for any $x\in \ker((F_d\circ \pi^n_d)_{d\mid n})$ there exists $\alpha\ge 1$ such that $\alpha x\in L_n$ and therefore $x$ itself must satisfy the first two conditions in the definition of $L_n$, that is
    \begin{equation*}
        \ker((F_d\circ \pi^n_d)_{d\mid n})\big) \subseteq 
        \Big\{
        x\in\Z^n: x_0=0,\, x_j+x_{n-j}=0\text{ for all $1\le j\le \frac{n-1}2$}
        \Big\}.
    \end{equation*}
    
    The latter inclusion, together with the vector $(\mu_A(j)-\mu_{A'}(j))_{0\le j<n}\in\Z^n$ belonging to the kernel we are studying, implies
    \begin{equation*}
        \mu_A(0)=\mu_{A'}(0) \;\; \text{and} \;\;
        \mu_A(j) + \mu_A(n-j) = \mu_{A'}(j) + \mu_{A'}(n-j) \text{ for all $1\le j\le n$,}
    \end{equation*}
    that is equivalent to $A\sim A'$. Finally, we conclude $A\sim_0 A'$ taking advantage of \cref{lem:basic-properties}-\cref{it:lem:basic-sim-and-fs-implies-sim0}.
\end{proof}

\section{Radon transform for finite abelian groups}\label{sec:radon}

In this section we will introduce a Radon transform for finite abelian groups and we will show an inversion formula for it. Then we will apply this tool to upgrade \cref{prop:ofs-implies-fs-regular} to the same statement with $\cyclic{n}$ replaced by $(\cyclic{n})^d$ for an arbitrary $d\ge 1$.

Let us introduce the discrete Radon transform.
\begin{definition}\label{def:radon}
    Let $n, d \ge 1$ be positive integers.
    Given a function $f:(\cyclic{n})^d\to\C$, its Radon transform is the function $Rf = R_{n, d}f:\Hom((\cyclic{n})^d,\, \cyclic{n})\times \cyclic{n}\to\C$ given by
    \begin{equation*}
        Rf(\psi, c) \defeq \sum_{\substack{x\in (\cyclic{n})^d\\ \psi(x) = c}} f(x),
    \end{equation*}  
    for all homomorphisms $\psi:(\cyclic{n})^d\to\cyclic{n}$ and all $c\in \cyclic{n}$.
\end{definition}

We named this transformation Radon transform in analogy with the continuous Radon transform on $\R^n$ \cite{helgason1999} which, given a function $f:\R^d\to\R$, produces another function $Rf$ which takes an $(n-1)$-affine hyperplane and returns the integral of $f$ over such hyperplane. Notice that affine hyperplanes are exactly the fibers of linear functionals $\R^n\to\R$ and thus the continuous Radon transform on $\R^d$ coincides (up to adapting the definition to a non-discrete setting) with our definition if $\cyclic{n}$ is replaced by $\R$.

One may wonder if \cref{def:radon} would work even if $\cyclic{n}$ was replaced everywhere by an arbitrary finite abelian group $G$. Although everything would still hold, it is not appropriate to give such a definition. Indeed, any finite abelian group $G$ is a subgroup of $(\cyclic{n})^k$ for $n, k\ge 1$ (where $n$ is the largest order of an element in $G$). Hence the Radon transform on $G^d$ shall be defined as the restriction of $R_{n, kd}$ to $\Hom(G^d, \, \cyclic{n}) \times \cyclic{n}$; that is, by understanding $G^d$ as a subgroup of $(\cyclic{n})^{kd}$ and using the Radon transform of the latter (which uses homomorphisms with codomain equal to $\cyclic{n}$ instead of $G$; notice that $\cyclic{n}$ is a subgroup of $G$).

In the literature, one can find many definitions of discrete Radon transform:
\begin{itemize}
\item The definition given in \cite{DiaconisGraham1985} (and investigated in \cite{FranklGraham1987,Fill1989,Velasquez1997,DeDeoVelasquez2004}), which boils down to the convolution with the characteristic function of a fixed set, is completely unrelated to ours. 

\item 
The very general definition given in \cite{Bolker1987} coincides with ours for the group $(\cyclic{p})^d$ ($p$ being prime) and in that work it is named \emph{$(d-1)$-planes transform}. The assumptions of the criterion \cite[Theorem 1]{Bolker1987} to establish the existence of an inversion formula of a Radon transform do not hold for our Radon transform (for example for the group $(\cyclic{4})^2$). Let us remark that the $(d-1)$-planes transform defined for $\mathbb F_{p^k}$ does not coincide with our Radon transform on $(\cyclic{p^k})^d$ when $k>1$ (in particular, proving the invertibility of the $(d-1)$-planes transform seems to be considerably easier due to the larger number of symmetries).

\item 
The recent work \cite{ChoHyunMoon2018} defines a Radon transform which is almost equivalent to our discrete Radon transform on $(\cyclic{p})^d$, where $p$ is a prime number. In that paper the Radon transform (which they call \emph{classical Radon transform} to distinguish it from the one of Diaconis and Graham) coincides with the restriction of ours to the homomorphisms $\psi\in\Hom((\cyclic{p})^d,\, \cyclic{p})$ such that $\psi(0, 0, \dots, 0, 1)\not=0$. Due to this restriction, they cannot establish a full inversion formula \cite[Theorem 1]{ChoHyunMoon2018}.

\item In the work \cite{AbouelazIhsane2008}, the authors define a discrete Radon transform on $\Z^d$ which is equivalent to the Radon transform on $\Z^d$ with our notation (if one allows the group to be non-finite in the definition). An inversion formula \cite[Theorem 4.1]{AbouelazIhsane2008} is proven for such discrete Radon transform. Joining the methods of \cite{AbouelazIhsane2008} with ours, it might be possible to produce inversion formulas for the discrete Radon transform on groups $(\cyclic{n}\times\Z)^d$ that are neither finite nor torsion-free. We do not investigate this as it goes beyond the scope of the paper.

\item An alternative definition of discrete Radon transform for finite abelian groups is provided in \cite{Ilmavirta2014}. The \emph{maximal Radon transform} defined in this reference \cite[Section 7.3]{Ilmavirta2014} computes the sum of the function $f$ over all translations of maximal cyclic subgroups of $G$.

It is not hard to check that, for $p$ prime, the \emph{maximal Radon transform} on $(\cyclic{p})^2$ coincides with ours. In this special case, the author proves the invertibility of the Radon transform \cite[Lemma 3.4]{Ilmavirta2014}. In general his definition does not coincide with ours and, in particular, the \emph{maximal Radon transform} is not invertible in many important cases \cite[Propositions 7.2, 7.3]{Ilmavirta2014}.
\end{itemize}

The invertibility of the discrete Radon transform we have defined follows directly from the invertibility of the Fourier transform on finite abelian groups (see \cite[Part I]{Terras1999} for an introduction to the Fourier transform on finite abelian groups) (cf. \cite[Theorem 3.1]{helgason1999}, \cite{Strichartz1982}). The inversion formula one obtains in this way uses all the values of the Radon transform to recover $f(0)$.

The inversion formula we prove is \emph{stronger}, indeed $f(x)$ can be recovered using only the values of the Radon transform on the \emph{hyperplanes containing $x$}, that is from the values of $Rf(\psi, \psi(x))$ for all $\psi\in\Hom(\cyclic{n}^d, \cyclic{n})$. Notice that, since the Radon transform is not surjective onto its codomain, it is not strange that it admits different inversion formulas.

To avoid lengthy formulas, we will use the notation $\Hom_n^d\defeq \Hom((\cyclic{n})^d, \cyclic{n})$.

\begin{definition}
    A function $\lambda:\Hom_n^d\to\C$ is an \emph{inverting function} for the Radon transform on $(\cyclic{n})^d$ if 
    \begin{equation}\label{eq:inverting-function}
        f(0) = \sum_{\psi\in\Hom_n^d} \lambda(\psi)Rf(\psi, 0).
    \end{equation}
    for all functions $f:(\cyclic{n})^d\to\C$.
\end{definition}
Let us remark that if $\lambda$ is an inverting function for the Radon transform on $(\cyclic{n})^d$ then, for all $x\in(\cyclic{n})^d$,
\begin{equation*}
    f(x) = \sum_{\psi\in\Hom_n^d} \lambda(\psi)Rf(\psi, \psi(x)).
\end{equation*}
This identity follows from \cref{eq:inverting-function} applied to the function $\tilde f \defeq f(\emptyparam+x)$.

Thanks to the observation above, the inversion formula stated in \cref{thm:radon-inversion} is equivalent to the fact that the function $\lambda_{n,d}:\Hom_{n,d}\to\Q$, defined by
\begin{equation}\label{eq:def-lambda}
    \lambda_{n,d}(\psi)\defeq \frac{1}{n^{d-1}\varphi(n)}\prod_{p\mid \psi}(1-p^{d-1}),
\end{equation}
is an inverting function for the Radon transform on $(\cyclic{n})^d$.

Let us begin with two simple technical lemmas that will be useful in the proof of the inversion formula.

\begin{lemma}\label{lem:inversion-criterion}
    Let $n, d\ge 1$ be positive integers. 
    A function $\lambda:\Hom_n^d\to\C$ is an inverting function for the Radon transform on $(\cyclic{n})^d$ if and only if it satisfies, for all $x\in (\cyclic{n})^d$,
    \begin{equation*}
        \sum_{\substack{\psi \in \Hom_n^d \\ \psi(x) = 0}} \lambda(\psi) =
        \begin{cases}
            1 & \textnormal{if } x = 0, \\
            0 & \textnormal{otherwise}.
        \end{cases}
    \end{equation*}
\end{lemma}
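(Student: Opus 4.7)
The plan is to unravel the definition of an inverting function and reduce \eqref{eq:inverting-function} to a pointwise identity by testing against indicator functions (which span the space of all $f:(\cyclic{n})^d\to\C$). This is a purely formal bookkeeping argument, with the only real work being a Fubini-style exchange of summation order.

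For the ``only if'' direction, I fix $x\in(\cyclic{n})^d$ and apply the defining identity to the indicator $f = \mathbf{1}_{\{x\}}$. Then $f(0) = 1$ if $x = 0$ and $0$ otherwise, while $Rf(\psi, 0) = \sum_{y:\psi(y)=0} \mathbf{1}_{\{x\}}(y) = \mathbf{1}_{\{\psi(x) = 0\}}$. Substituting into \eqref{eq:inverting-function} gives exactly the claimed identity
\begin{equation*}
    \sum_{\substack{\psi\in\Hom_n^d\\ \psi(x)=0}} \lambda(\psi)
    = \begin{cases} 1 & x = 0, \\ 0 & \textnormal{otherwise}. \end{cases}
\end{equation*}

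For the ``if'' direction, I assume the displayed identity and compute for an arbitrary $f:(\cyclic{n})^d\to\C$ by expanding the definition of $Rf$ and swapping the two sums:
\begin{equation*}
    \sum_{\psi\in\Hom_n^d} \lambda(\psi) Rf(\psi, 0)
    = \sum_{\psi\in\Hom_n^d} \lambda(\psi)\!\!\sum_{\substack{x\in(\cyclic{n})^d\\ \psi(x)=0}}\!\! f(x)
    = \sum_{x\in(\cyclic{n})^d} f(x) \!\!\sum_{\substack{\psi\in\Hom_n^d\\ \psi(x)=0}}\!\! \lambda(\psi).
\end{equation*}
The inner sum equals $1$ when $x=0$ and $0$ otherwise by hypothesis, so the whole expression collapses to $f(0)$, proving that $\lambda$ is an inverting function.

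Since both directions are immediate once the summation-exchange is made, there is no real obstacle here; the lemma is essentially the statement that ``inverting function'' is the pointwise condition in disguise. This reformulation is the whole point of the lemma: it turns the problem of verifying \eqref{eq:inverting-function} for the specific $\lambda_{n,d}$ defined in \eqref{eq:def-lambda} into a concrete combinatorial identity on $\Hom_n^d$, which is what the subsequent sections will presumably verify.
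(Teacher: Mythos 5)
Your proof is correct and takes essentially the same approach as the paper: both hinge on expanding $Rf$ and swapping the order of summation to obtain $\sum_\psi \lambda(\psi) Rf(\psi,0) = \sum_x f(x)\sum_{\psi(x)=0}\lambda(\psi)$. The paper compresses the two directions into the remark that $f$ is arbitrary, whereas you spell out the ``only if'' direction by testing against indicators, but this is the same argument.
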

\begin{proof}
    For any $f:(\cyclic{n})^d\to\C$ and any $\lambda:\Hom_n^d\to \C$, it holds
    \begin{align*}
        \sum_{\psi\in\Hom_n^d} \lambda(\psi) Rf(\psi, 0)
        &= 
        \sum_{\psi\in\Hom_n^d} 
        \lambda(\psi) \sum_{\substack{x\in (\cyclic{n})^d \\ \psi(x)=0}} f(x)
        \\
        &=
        \sum_{x\in (\cyclic{n})^d} f(x) \sum_{\substack{\psi\in\Hom_n^d \\ \psi(x)=0}} \lambda(\psi).
    \end{align*}
    Thanks to this identity, the desired statement follows because $f$ can be chosen arbitrarily.
\end{proof}

In the next lemma we show that inverting functions behave nicely with respect to products.

\begin{lemma}\label{lem:inversion-products}
    Let $m, n, d\ge 1$ be positive integers such that $m$ and $n$ are coprime.
    Let $\lambda_m:\Hom_m^d\to\C$ and $\lambda_n:\Hom_n^d\to\C$ be inverting functions for the Radon transform on $(\cyclic{m})^d$ and $(\cyclic{n})^d$ respectively.

    Let $\pi_m:\cyclic{mn}\to\cyclic{m}$ and $\pi_m^d:(\cyclic{mn})^d\to(\cyclic{m})^d$ be the canonical projections. Define $\pi_n$ and $\pi_n^d$ analogously.
    Let $\iota_m:\Hom_{mn}^d\to\Hom_m^d$ be the map such that, for all $\psi\in\Hom_{mn}^d$, it holds $\iota_m(\psi)\circ\pi_m^d = \pi_m\circ \psi$. Define $\iota_n$ analogously.
    
    The function $\lambda_{mn}:\Hom_{mn}^d\to\C$ defined as
    \begin{equation*}
        \lambda_{mn}(\psi)\defeq \lambda_m(\iota_m(\psi))\lambda_n(\iota_n(\psi))
    \end{equation*}
    is an inverting function for the Radon transform on $(\cyclic{mn})^d$.
\end{lemma}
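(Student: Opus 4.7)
The plan is to apply the criterion of \cref{lem:inversion-criterion} to $\lambda_{mn}$, using the Chinese Remainder Theorem to reduce the required identity to the corresponding identities for $\lambda_m$ and $\lambda_n$.

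First I would record the structural facts coming from $\gcd(m,n)=1$. The projections $\pi_m^d$ and $\pi_n^d$ give an isomorphism $(\cyclic{mn})^d \cong (\cyclic{m})^d \oplus (\cyclic{n})^d$, so every $x\in(\cyclic{mn})^d$ is determined by the pair $(x_m, x_n) \defeq (\pi_m^d(x), \pi_n^d(x))$, and $x=0$ iff $x_m = 0$ and $x_n = 0$. Similarly, the map $\psi \mapsto (\iota_m(\psi), \iota_n(\psi))$ gives a bijection $\Hom_{mn}^d \to \Hom_m^d \times \Hom_n^d$: injectivity is immediate from the defining identities $\iota_m(\psi)\circ\pi_m^d = \pi_m\circ\psi$ and $\iota_n(\psi)\circ\pi_n^d = \pi_n\circ\psi$, and surjectivity follows by combining any pair $(\psi_m,\psi_n)$ into a homomorphism via CRT on the codomain. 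Crucially, for $\psi \in \Hom_{mn}^d$, the condition $\psi(x)=0$ in $\cyclic{mn}$ is equivalent to the simultaneous conditions $\iota_m(\psi)(x_m)=0$ in $\cyclic{m}$ and $\iota_n(\psi)(x_n)=0$ in $\cyclic{n}$.

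Given these identifications, the sum appearing in the criterion factors as a product:
\begin{equation*}
    \sum_{\substack{\psi\in\Hom_{mn}^d \\ \psi(x)=0}} \lambda_{mn}(\psi)
    =
    \Bigg(\sum_{\substack{\psi_m\in\Hom_m^d \\ \psi_m(x_m)=0}} \lambda_m(\psi_m)\Bigg)
    \Bigg(\sum_{\substack{\psi_n\in\Hom_n^d \\ \psi_n(x_n)=0}} \lambda_n(\psi_n)\Bigg).
\end{equation*}
Since $\lambda_m$ and $\lambda_n$ are inverting functions, \cref{lem:inversion-criterion} tells us that the first factor equals $1$ if $x_m=0$ and $0$ otherwise, and analogously for the second. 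Hence the product equals $1$ precisely when $x_m=x_n=0$, i.e.\ when $x=0$, and vanishes otherwise. Applying \cref{lem:inversion-criterion} in the converse direction yields the claim.

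I expect no serious obstacle here: the only point requiring care is the verification that $\psi\mapsto(\iota_m(\psi),\iota_n(\psi))$ is a bijection respecting the vanishing condition $\psi(x)=0$, which is a straightforward CRT computation. Once that is in place, the multiplicativity of $\lambda_{mn}$ makes the sum split as a product, and the rest is a direct application of the previous lemma.
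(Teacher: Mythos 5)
Your proposal is correct and matches the paper's proof essentially verbatim: both use the CRT isomorphism $(\cyclic{mn})^d\cong(\cyclic{m})^d\oplus(\cyclic{n})^d$, the induced bijection $\Hom_{mn}^d\to\Hom_m^d\times\Hom_n^d$ via $(\iota_m,\iota_n)$, the equivalence of the vanishing conditions, and the resulting factorization of the sum, followed by two applications of \cref{lem:inversion-criterion}.
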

\begin{proof}
    The map $(\iota_m,\, \iota_n):\Hom_{mn}^d\to\Hom_m^d\times\Hom_n^d$ is an isomorphism (induced by the isomorphism $(\pi_m, \pi_n):\cyclic{mn}\to\cyclic{m}\times\cyclic{n}$).
    Moreover, given $\psi\in\Hom_{mn}^d$ and $x\in(\cyclic{mn})^d$, the condition $\psi(x)=0$ is equivalent to $\iota_m(\psi)(\pi_m^d(x))=0$ and $\iota_n(\psi)(\pi_n^d(x))=0$.

    Thus, for all $x\in(\cyclic{mn})^d$, we have
    \begin{align*}
        \sum_{\substack{\psi\in\Hom_{mn}^d\\ \psi(x)=0}}\lambda_{mn}(\psi)
        &=
        \sum_{\substack{\psi\in\Hom_{mn}^d\\ 
        \iota_m(\psi)(\pi_m^d(x))=0 \\ 
        \iota_n(\psi)(\pi_n^d(x))=0}} \lambda_m(\iota_m(\psi))\lambda_n(\iota_n(\psi))
        \\
        &=
        \bigg(\sum_{\substack{\psi\in\Hom_m^d \\ \psi(\pi_m^d(x))=0}} \lambda_m(\psi)\bigg)
        \bigg(\sum_{\substack{\psi\in\Hom_n^d \\ \psi(\pi_n^d(x))=0}} \lambda_n(\psi)\bigg)
        \\
        &=
        \begin{cases}
            1 & \textnormal{if } \pi_m^d(x) = 0\text{ and } \pi_n^d(x)=0, \\
            0 & \textnormal{otherwise},
        \end{cases}
    \end{align*}
    where in the last step we used that $\lambda_m$ and $\lambda_n$ are inverting functions and we have applied \cref{lem:inversion-criterion}.
    Since $x=0$ if and only if $\pi_m^d(x)=0$ and $\pi_n^d(x)=0$, the identity above implies that $\lambda_{mn}$ is an inverting function for the Radon transform on $(\cyclic{mn})^d$ thanks to \cref{lem:inversion-criterion}.
\end{proof}

We are ready to prove that $\lambda_{n,d}$ (see \cref{eq:def-lambda}) is an inverting function for the Radon transform on $(\cyclic{n})^d$.

\begin{proof}[Proof of \cref{thm:radon-inversion}]
We have already observed that if we can prove the inversion formula for $x=0$, then the general case follows.
Hence, our goal is to prove the inversion formula for $f(0)$.

For any $m,n\ge 1$ coprime, it holds $\lambda_{mn, d}(\psi)=\lambda_{m,d}(\iota_m(\psi))\lambda_{n,d}(\iota_n(\psi))$ (see \cref{lem:inversion-products} for the definition of $\iota_m,\iota_n$). This identity follows from the fact that Euler's totient function satisfies $\varphi(m)\varphi(n)=\varphi(mn)$ and, for a prime $p\mid m$, the condition $p\mid \psi$ is equivalent to the condition $p\mid \iota_m(\psi)$.
Therefore, thanks to \cref{lem:inversion-products}, since any number $n$ can be factored into a product of prime powers, if we are able to prove that $\lambda_{n,d}$ is an inverting function when $n$ is a prime power then the full result follows.

It remains to prove that $\lambda_{n,d}$ is an inverting function for $n=p^k$ prime power. In order to do that we start from a \emph{bad} but simple inversion formula and we exploit some simple symmetries of the Radon transform to upgrade it to the desired inversion formula.

Notice that any character of $(\cyclic{n})^d$ can be represented uniquely as $(\cyclic{n})^d\ni x\mapsto \omega_n^{\psi(x)}\in\C$, with $\psi\in\Hom_n^d$. Hence, by using this bijection between the characters and the homomorphisms, the inversion formula for the Fourier transform on $(\cyclic{n})^d$ (see \cite[Chapter 10, Theorem 2]{Terras1999}) can be stated as
\begin{equation*}
    f(0) = \frac1{n^d}\sum_{\psi\in\Hom_n^d}\hat f(\psi) 
    = \frac1{n^d}\sum_{\psi\in\Hom_n^d}\sum_{x\in(\cyclic{n})^d} f(x)\omega_n^{-\psi(x)}.
\end{equation*}
By definition of $Rf$, the previous identity becomes
\begin{equation*}
    f(0)=\frac1{n^d}\sum_{\psi\in\Hom_n^d}\sum_{0\le c <n}\omega_n^{-c} Rf(\psi, c).
\end{equation*}
Notice that this is already a valid inversion formula for the Radon transform, but not the one we are looking for.

By exploiting the invariance of the Radon transform $Rf(a\psi, ac) = Rf(\psi, c)$, for any $0\le a<n$ coprime with $n$, we can continue the previous identity (recall that $\varphi$ denotes Euler's totient function)
\begin{equation*}
    =\frac{1}{n^d}\sum_{\psi\in\Hom_n^d}\sum_{0\le c <n} Rf(\psi, c) \frac{1}{\varphi(n)}\sum_{a\in(\cyclic{n})^*} \omega_n^{-ac}.
\end{equation*}
To proceed further, we remember that the sum of the primitive roots coincides with the M\"obius $\mu$ function; hence we get
\begin{equation*}
    =\frac1{n^d}\sum_{\psi\in\Hom_n^d}\sum_{g\mid n} \frac{\mu(n/g)}{\varphi(n/g)}\sum_{\substack{0\le c <n\\ \gcd(c, n)=g}} Rf(\psi, c).
\end{equation*}
Now, let us use that $n=p^k$ is a prime power. 
Since $\mu$ is zero when evaluated over non-squarefree numbers, we may assume that $n/g=1$ or $n/g=p$ in the latter formula. Thus we obtain
\begin{equation*}
    =\frac1{p^{kd}}\sum_{\psi\in\Hom_n^d}\Big(Rf(\psi, 0)-\frac{1}{p-1}\sum_{t=1}^{p-1}
    Rf(\psi, tp^{k-1})\Big).
\end{equation*}
Thanks to the identity
\begin{equation*}
    \sum_{t=0}^{p-1}
    Rf(\psi, tp^{k-1})
    = Rf(p\psi, 0),
\end{equation*}
we can continue our long chain of equalities
\begin{equation*}
    = \frac1{p^{kd}}\sum_{\psi\in\Hom_n^d}\Big(\frac{p}{p-1}Rf(\psi, 0)-\frac{1}{p-1}Rf(p\psi, 0)\Big).
\end{equation*}
Notice that we have written $f(0)$ using only the values of the Radon transform over hyperplanes containing $0$.
Let us observe that, for $\psi\in\Hom_n^d$, there can be either $0$ or $p^d$ different $\psi'\in\Hom_n^d$ such that $p\psi'=\psi$, depending on whether $p\mid \psi$ or not (recall that $p\mid \psi$ is equivalent to $p\mid \psi(x)$ for all $x\in(\cyclic{n})^d$).
Thanks to this observation, we obtain that $f(0)$ is equal to
\begin{equation*}
    = \frac1{p^{kd}}
    \sum_{\psi\in\Hom_n^d}Rf(\psi, 0)\Big(\frac{p}{p-1}-\frac{p^d}{p-1}[\,p\mid \psi\,]\Big),
\end{equation*}
where $[\emptyparam]$ denotes the Iverson's bracket. Through some simple algebraic manipulation, we finally deduce
\begin{equation*}
    f(0)=\frac1{n^{d-1}\varphi(n)} \sum_{\psi\in\Hom_n^d}Rf(\psi, 0)
    (1-p^{d-1}[\,p\mid \psi\,]),
\end{equation*}
which is the desired inversion formula for $n=p^k$.
\end{proof}

Let us apply this inversion formula to establish the $\FS$-regularity of the group $(\cyclic{n})^d$ when $n\in O_{\FS}$. The idea is to project through an homomorphism onto $\cyclic{n}$, use the $\FS$-regularity of $\cyclic{n}$ proven in \cref{prop:ofs-implies-fs-regular}, and then recover the $\FS$-regularity of $(\cyclic{n})^d$ thanks to the invertibility of the Radon transform on $(\cyclic{n})^d$.

\begin{proposition}\label{prop:ofs-implies-fs-regular-for-products}
    For any $n\in O_{\FS}$ and any $d\ge 1$, the group $(\cyclic{n})^d$ is $\FS$-regular.
\end{proposition}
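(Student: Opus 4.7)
The plan is exactly the one outlined in Step 2 of \cref{subsec:sketch}. Suppose $A, A' \in \M((\cyclic{n})^d)$ satisfy $\FS(A) = \FS(A')$. For every $\psi \in \Hom((\cyclic{n})^d, \cyclic{n})$, linearity gives $\FS(\psi(A)) = \FS(\psi(A'))$ as multisets in $\cyclic{n}$, and \cref{prop:ofs-implies-fs-regular} then yields $\psi(A) \sim_0 \psi(A')$, which in particular implies $\psi(A) \sim \psi(A')$.

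I would next define $f \defeq \mu_A - \mu_{A'} : (\cyclic{n})^d \to \Z$. From $\psi(A) \sim \psi(A')$, a quick multiplicity computation (of exactly the form used in the proof of \cref{lem:basic-properties}) shows that $c \mapsto \mu_{\psi(A)}(c) - \mu_{\psi(A')}(c)$ is an odd function of $c \in \cyclic{n}$. Since this function is precisely $Rf(\psi, \cdot)$, I will have established the key structural fact: $Rf(\psi, c) = -Rf(\psi, -c)$ for every $\psi \in \Hom_n^d$ and every $c \in \cyclic{n}$.

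The heart of the argument is then to promote this hyperplane-wise oddness to a global oddness of $f$ via the inversion formula of \cref{thm:radon-inversion}. Summing that formula at $x$ and at $-x$ gives
\[
f(x) + f(-x) = \frac{1}{n^{d-1}\varphi(n)} \sum_{\psi \in \Hom_n^d} \bigl[Rf(\psi, \psi(x)) + Rf(\psi, -\psi(x))\bigr]\prod_{p \mid \psi}(1 - p^{d-1}) = 0,
\]
because each bracketed term vanishes by the oddness in $c$. Thus $\mu_A(x) + \mu_A(-x) = \mu_{A'}(x) + \mu_{A'}(-x)$ for every $x \in (\cyclic{n})^d$, and a straightforward pairing in each orbit $\{x, -x\}$ produces a $B \subseteq A$ with $A' = (A \setminus B) \cup (-B)$; that is, $A \sim A'$.

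To conclude I would use that $n \in O_{\FS}$ is odd, so $(\cyclic{n})^d$ contains no element of order $2$, and then \cref{lem:basic-properties}-\cref{it:lem:basic-sim-and-fs-implies-sim0} upgrades $A \sim A'$ (together with $\FS(A) = \FS(A')$) to $A \sim_0 A'$, as required. The only substantive step is the invocation of the Radon inversion formula \cref{thm:radon-inversion} to pass from the fiberwise symmetry of $Rf$ to the symmetry of $f$; the rest is linear bookkeeping on multisets.
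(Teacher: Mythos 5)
Your proof is correct and follows essentially the same path as the paper's: project via each $\psi\in\Hom((\cyclic{n})^d,\cyclic{n})$, invoke \cref{prop:ofs-implies-fs-regular} to get $\psi(A)\sim_0\psi(A')$, and use the Radon inversion formula of \cref{thm:radon-inversion} to pass from the projections back to the multiplicity functions on $(\cyclic{n})^d$, concluding with \cref{lem:basic-properties}-\cref{it:lem:basic-sim-and-fs-implies-sim0}. Your reformulation in terms of the odd function $f = \mu_A - \mu_{A'}$ and the fiberwise oddness $Rf(\psi, c) = -Rf(\psi, -c)$ is an algebraically identical rephrasing of the paper's equality of symmetric sums $\mu_{\psi(A)}(c)+\mu_{\psi(A)}(-c)=\mu_{\psi(A')}(c)+\mu_{\psi(A')}(-c)$; the only minor quibble is that the oddness of $\mu_A-\mu_{A'}$ under $A\sim A'$ is not literally proved in \cref{lem:basic-properties} but is an immediate consequence of \cref{def:sim}, as you correctly assert.
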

\begin{proof}
    For a multiset $B\in\M((\Z/n\Z)^d)$, by definition of the Radon transform on $((\cyclic{n})^d$ (see \cref{def:radon}), one has $R\mu_B(\psi, c) = \mu_{\psi(B)}(c)$ (recall that $\mu_B$ denotes the multiplicity of elements in the multiset $B$, see \cref{subsec:multisets}) for any $\psi\in\Hom_n^d$ and any $c\in\cyclic{n}$. Therefore, the inversion formula of \cref{thm:radon-inversion} (recall also \cref{eq:def-lambda}) implies
    \begin{equation}\label{eq:reconstructing-from-projections}
    \begin{aligned}
        \mu_B(x) =
        \sum_{\psi\in\Hom_n^d} \lambda_{n,d}(\psi) \mu_{\psi(B)}(\psi(x))  ,  
    \end{aligned}
    \end{equation}
    for all $x\in(\cyclic{n})^d$.
    Notice that this formula allows us to reconstruct $B$ given all its projections $\psi(B)$ onto $\cyclic{n}$.

    Take two multisets $A, \, A' \in \M((\cyclic{n})^d)$ such that $\FS(A)=\FS(A')$; our goal is to prove that $A\sim_0 A'$.
    
    For any $\psi \in \Hom_n^d$, it holds $\FS(\psi(A))=\FS(\psi(A'))$
    and therefore, since we have shown that $\cyclic{n}$ is $\FS$-regular in \cref{prop:ofs-implies-fs-regular}, we have $\psi(A) \sim_0 \psi(A')$.
    Thus (we use only $\psi(A)\sim \psi(A')$), we deduce that for any $\psi\in\Hom_n^d$,
    \begin{equation}\label{eq:projections-are-sim}
        \mu_{\psi(A)}(x) + \mu_{\psi(A)}(-x) = \mu_{\psi(A')}(x) + \mu_{\psi(A')}(-x)
    \end{equation}
    for all $x\in(\cyclic{n})^d$.
    
    Joining \cref{eq:reconstructing-from-projections,eq:projections-are-sim}, we obtain
    \begin{align*}
        \mu_A(x) + \mu_A(-x)
        & = \sum_{\psi\in\Hom_n^d} \lambda_{n,d}(\psi) \big(\mu_{\psi(A)}(\psi(x)) + \mu_{\psi(A)}(-\psi(x))\big) \\
        & = \sum_{\psi\in\Hom_n^d} \lambda_{n,d}(\psi) \big(\mu_{\psi(A')}(\psi(x)) + \mu_{\psi(A')}(-\psi(x))\big) \\
        & = \mu_{A'}(x) + \mu_{A'}(-x)
    \end{align*}
    for all $x\in (\cyclic{n})^d$. The latter identity is equivalent to $A\sim A'$, which implies $A\sim_0 A'$ thanks to \cref{lem:basic-properties}-\cref{it:lem:basic-sim-and-fs-implies-sim0}.
\end{proof}

\section{\texorpdfstring{$\FS$}{FS}-regularity of products with \texorpdfstring{$\Z$}{Z}}\label{sec:multiply-by-Z}
In this section we show that multiplying by $\Z$ does not break the $\FS$-regularity of a group (see \cref{prop:multiply-by-Z}).
In order to do it, we will need two technical lemmas. The second one, \cref{lem:shift-regular-iff-regular}, gives a condition equivalent to $\FS$-regularity which comes handy in the proof of the main result of this section.

\begin{lemma}\label{lem:add-subset-sums}
    Let $G$ be an abelian group without elements of order $2$.
    Given three multisets $A, \, A', \, B \in \M(G)$, if $A + \FS(B) = A' + \FS(B)$, then $A = A'$.
\end{lemma}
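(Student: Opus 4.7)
The plan is to argue by induction on $|B|$, with a short auxiliary case analysis at the inductive step. For $B = \varnothing$ we have $\FS(B) = \{0_G\}$, so the hypothesis $A + \FS(B) = A' + \FS(B)$ is literally $A = A'$. For the inductive step, pick any $b \in B$ and set $B' \defeq B \setminus \{b\}$. From $\PS(B) = \PS(B') \cup \{B'' \cup \{b\} : B'' \in \PS(B')\}$ we get the identity $\FS(B) = \FS(B') \cup (b + \FS(B'))$, hence
\begin{equation*}
    A + \FS(B) = \bigl(A + \FS(B')\bigr) \cup \bigl(b + A + \FS(B')\bigr),
\end{equation*}
and likewise for $A'$. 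Writing $C \defeq A + \FS(B')$ and $C' \defeq A' + \FS(B')$, the hypothesis reduces to $C \cup (C + b) = C' \cup (C' + b)$. If we can show from this that $C = C'$, then the inductive hypothesis applied to $A$, $A'$ and the smaller multiset $B'$ yields $A = A'$.

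The heart of the proof is therefore the following sub-claim: if $b \in G$ and $C \cup (C + b) = C' \cup (C' + b)$, then $C = C'$. Set $f \defeq \mu_C - \mu_{C'} : G \to \Z$; this is a function with finite support (both multisets are finite), and the multiset identity translates into $f(x) + f(x - b) = 0$ for every $x \in G$, i.e.\ $f(x - b) = -f(x)$.

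Now comes the case analysis, which is where the assumption that $G$ has no element of order $2$ enters. Since every element of even order in an abelian group has a power of order $2$, our hypothesis forces $\ord(b) \in \{1, \infty\} \cup \{\text{odd integers} \ge 3\}$. If $\ord(b) = 1$, then $b = 0_G$ and the relation $2f(x) = 0$ immediately gives $f \equiv 0$. If $\ord(b) = \infty$, iterating the antiperiodicity yields $f(x - kb) = (-1)^k f(x)$ for every $k \in \Z$; if some $f(x_0) \neq 0$, then $f$ is nonzero on the infinite orbit $\{x_0 - kb : k \in \Z\}$, contradicting its finite support. If $\ord(b) = n$ with $n$ odd and $n \ge 3$, iterating $n$ times gives $f(x) = f(x - nb) = (-1)^n f(x) = -f(x)$, forcing $f \equiv 0$. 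In every case $f = 0$, so $C = C'$, completing the induction.

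I do not expect any serious obstacle here; the only place that requires attention is the case analysis on $\ord(b)$, which is exactly where the \emph{no element of order $2$} assumption is used (and where the statement would genuinely fail without it, since $C = \{0\}$, $C' = \{b\}$ with $2b = 0$ would be a counterexample).
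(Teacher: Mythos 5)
Your proof is correct, and for the core step it takes a genuinely different route from the paper. Both proofs reduce, by an outer induction on $|B|$, to the case of a singleton $B = \{b\}$ via $\FS(B) = \FS(B\setminus\{b\}) \cup \bigl(b + \FS(B\setminus\{b\})\bigr)$. The paper then disposes of the singleton case with a second, combinatorial induction on $|A|$: it shows (by tracking the alternating chain $a, a+b, a+2b, \dots$ and using that $\ord(b)$ is odd or infinite) that $A$ and $A'$ must share an element $\bar a$, removes it from both sides, and recurses. You instead pass to the difference of multiplicity functions $f = \mu_C - \mu_{C'}$, translate the multiset identity into the antiperiodicity $f(x-b) = -f(x)$, and kill $f$ by a short case analysis on $\ord(b)$ (iterating an odd number of times forces $f = -f$; infinite order contradicts finite support). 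This is a cleaner, one-level argument, and it is essentially a real-space version of the Fourier-analytic proof the paper sketches in a remark following the lemma: the relation $f * (\delta_0 + \delta_b) = 0$ has only the trivial solution precisely because no character of $G$ takes the value $-1$ on $b$ when $\ord(b)$ is odd or infinite. Each approach buys something: the paper's is entirely elementary and matches the combinatorial flavor of that section, while yours is shorter and makes the role of the no-order-$2$ hypothesis appear at exactly one transparent spot. (Minor stylistic note: your $\ord(b)=1$ case is subsumed by the odd-order case, since iterating once already gives $f = -f$.)
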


\begin{proof}
    Let us first prove the result when $B = \{b\}$ is a singleton. We prove the result by induction on the cardinality of $A$.
    
    If $|A|=0$, then $\varnothing = A + \FS(B) = A' + \FS(B)$ and thus $A'=\varnothing$.
    
    To handle the case $|A|>0$, we begin by showing that $A$ and $A'$ have a common element. We argue by contradiction, hence we assume that $A$ and $A'$ are disjoint.

    Take any $a \in A$. We have $a + b \in A + \FS(B) = A' + \{0, b\}$. Since $a\not\in A'$, it must hold $a + b \in A'$. By repeating this argument (swapping the role of $A$ and $A'$ and replacing $a$ with $a+b$) we obtain that $a+2b\in A$. Repeating such argument $k$ times, we obtain that $a + kb \in A$ if $k$ is even, and $a + kb \in A'$ if $k$ is odd.
    Since $A$ and $A'$ are finite, $b$ must have finite order, otherwise the elements $(a + kb)_{k \in \N}$ would be all distinct.
    Let $\ord(b)$ be the order of $b$; by assumption $\ord(b)$ is odd. We have the contradiction $A\ni a = a + \ord(b)b \in A'$; therefore we have proven that $A$ and $A'$ have a common element.
    
    Now pick $\bar a \in A \cap A'$.
    It holds
    \begin{align*}
        (A \setminus \{\bar a\}) + \FS(B) & = (A + \FS(B)) \setminus \{\bar a, \, \bar a + b\} \\
        & = (A' + \FS(B)) \setminus \{\bar a, \, \bar a + b\} = (A' \setminus \{\bar a\}) + \FS(B).
    \end{align*}
    Therefore, by the induction hypothesis, $A \setminus \{\bar a\} = A' \setminus \{\bar a\}$,
    which is equivalent to $A = A'$.

    Let us now treat general multisets $B$. We proceed by induction on the cardinality of $B$; the case $|B|=0$ is trivial and the case $|B|=1$ is already established, so we may assume $|B|>1$.
    
    Pick an element $\bar b\in B$.
    We have
    \begin{equation*}
        A + \FS(B) = (A + \FS(B \setminus \{\bar b\})) + \FS(\{\bar b\}),
    \end{equation*}
    and likewise for $A'$.
    Applying the induction hypothesis for the three multiset $A + \FS(B \setminus \{\bar b\}), A' + \FS(B \setminus \{\bar b\}), \{\bar b\}$, yields the relation $A + \FS(B \setminus \{\bar b\}) = A' + \FS(B \setminus \{\bar b\})$,
    and one more application yields the sought $A = A'$.
\end{proof}

\begin{remark}
    \Cref{lem:add-subset-sums} admits a beautiful short proof by computing the Fourier transform (refer to \cite[Chapter VI]{HewittRoss1979} for an introduction to the Fourier analysis on groups) of the multiplicity functions of the two multisets $A+\FS(B)$ and $A'+\FS(B)$ and using the assumption that $G$ has no elements of order $2$ to deduce that a character $\chi\in \hat G$ cannot take the value $-1$. This proof was suggested to us by Noah Kravitz. We decided to keep the combinatorial proof since it is more in line with the \emph{elementary} spirit of this section.
\end{remark}

\begin{lemma}\label{lem:shift-regular-iff-regular}
    Let $G$ be an abelian group without elements of order $2$.
    The group $G$ is $\FS$-regular if and only if, for all $A, A'\in\M(G)$ such that $\FS(A)=\FS(A')+g$ for some $g\in G$, it holds $A\sim A'$.
\end{lemma}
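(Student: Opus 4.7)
The plan is to derive both implications directly from the toolkit already compiled in \cref{lem:basic-properties}; no new machinery is needed, since the statement is essentially a bookkeeping reformulation of parts of that lemma.

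For the forward direction, I would assume $G$ is $\FS$-regular and take $A, A' \in \M(G)$ with $\FS(A) = \FS(A') + g$. Applying \cref{lem:basic-properties}~\cref{it:lem:basic-shift-implies-equal} produces some $A'' \in \M(G)$ with $A'' \sim A'$ and $\FS(A) = \FS(A'')$. Then $\FS$-regularity yields $A \sim_0 A''$, and since $\sim_0$ is a special case of $\sim$ (flipping the signs of a zero-sum subset is in particular flipping the signs of a subset), we get $A \sim A''$. Transitivity of $\sim$, noted immediately after \cref{def:sim}, then gives $A \sim A'$.

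For the reverse direction, I would assume the shift hypothesis. The implication $A \sim_0 A' \Rightarrow \FS(A) = \FS(A')$ is \cref{lem:basic-properties}~\cref{it:lem:basic-sim0-implies-equal}, which holds unconditionally. Conversely, suppose $\FS(A) = \FS(A')$; rewriting this as $\FS(A) = \FS(A') + 0_G$ and invoking the hypothesis with $g = 0_G$ gives $A \sim A'$. Since $G$ has no element of order $2$, I can now feed this into \cref{lem:basic-properties}~\cref{it:lem:basic-sim-and-fs-implies-sim0} to upgrade $A \sim A'$ to $A \sim_0 A'$, establishing $\FS$-regularity.

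There is no real obstacle in the argument: the two nontrivial inputs are \cref{it:lem:basic-shift-implies-equal} (used to translate the shift $g$ into a genuine sign-flip equivalence) and \cref{it:lem:basic-sim-and-fs-implies-sim0} (used to promote $\sim$ to $\sim_0$). The only point that warrants attention is making sure the hypothesis of no order-$2$ elements is explicitly deployed at the unique step where it is needed, namely in applying \cref{it:lem:basic-sim-and-fs-implies-sim0}.
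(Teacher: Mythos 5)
Your proposal is correct and follows essentially the same route as the paper: both directions are reduced to \cref{lem:basic-properties}, using \cref{it:lem:basic-shift-implies-equal} to absorb the shift $g$ in the forward direction and \cref{it:lem:basic-sim-and-fs-implies-sim0} to promote $\sim$ to $\sim_0$ in the reverse direction. Your write-up is a little more explicit about the trivial substitutions (that $\sim_0$ is a special case of $\sim$, and that the reverse direction plugs in $g=0_G$), but there is no substantive difference.
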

\begin{proof}
    Assume that $G$ is $\FS$-regular and take $A, A'\in\M(G)$ such that $\FS(A)=\FS(A')+g$ for some $g\in G$. Applying \cref{lem:basic-properties}-\cref{it:lem:basic-shift-implies-equal}, we produce a multiset $A''\in\M(G)$ such that $A''\sim A'$ and $\FS(A)=\FS(A'')$; then we deduce $A\sim_0 A''$ because $G$ is $\FS$-regular.
    So, we get $A\sim_0 A''\sim A'$ which implies $A\sim A'$ by transitivity.
    
    Let us now show the converse. Given $A, A'\in\M(G)$ such that $\FS(A)=\FS(A')$, the condition described in the statement implies $A\sim A'$ which implies $A\sim_0 A'$ thanks to \cref{lem:basic-properties}-\cref{it:lem:basic-sim-and-fs-implies-sim0}. Therefore we have proven the $\FS$-regularity of $G$.
\end{proof}

\begin{proposition}
    \label{prop:multiply-by-Z}
    If $G$ is an $\FS$-regular abelian group, then also $G\oplus\Z$ is $\FS$-regular.
\end{proposition}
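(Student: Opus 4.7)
The plan is to apply \cref{lem:shift-regular-iff-regular}: since $G\oplus\Z$ has no element of order $2$ (the $\FS$-regular group $G$ has none, as otherwise it would contain the non-$\FS$-regular subgroup $\cyclic{2}$), it suffices to prove that whenever $A,A'\in\M(G\oplus\Z)$ satisfy $\FS(A)=\FS(A')+h$ for some $h$, one has $A\sim A'$. I proceed by induction on $|A|$ (which equals $|A'|$ since $|\FS(A)|=2^{|A|}$), the base case $|A|=0$ being immediate. Applying $\sim$-transformations to $A$ and $A'$ separately only modifies $h$ (by \cref{lem:basic-properties}-\cref{it:lem:basic-sim-implies-shift}), so I may assume every element of $A\cup A'$ has nonnegative $\Z$-coordinate.

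Write $\pi:G\oplus\Z\to\Z$ and $\pi_G:G\oplus\Z\to G$ for the projections, and split $A=A_+\cup A_0$, $A'=A'_+\cup A'_0$ according to whether $\pi$ is positive or zero. If $A_+=A'_+=\varnothing$ the whole problem lives in $\M(G\oplus\{0\})\cong\M(G)$ and is finished by \cref{lem:shift-regular-iff-regular} applied to $G$. Otherwise the maximum $\pi$-value on $\FS(A)$ equals $Z_A\defeq\pi(\sum A_+)$ and is realized precisely by the sub-multiset $\{\sum A_+\}+\FS(A_0)$, lying in the single $G$-fiber $G\oplus\{Z_A\}$; the analogous description holds for $\FS(A')+h$. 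Equating the two and projecting to $G$ yields a shift-equality in $\M(G)$ of the form $\FS(A_0)+g_1=\FS(A'_0)+g_2$, so \cref{lem:shift-regular-iff-regular} inside $G$ gives $A_0\sim A'_0$. A $\sim$-transformation supported in $A_0\subseteq A$ then lets me assume $A_0=A'_0$, after which \cref{lem:add-subset-sums} cancels the common $\FS(A_0)$ and produces $\sum A_+=\sum A'_++h$ in $G\oplus\Z$; in particular $|A_+|=|A'_+|\ge 1$.

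To close the induction I exploit the total order on $\Z$: let $z_-$ be the smallest positive $\Z$-coordinate appearing in $A_+$. Comparing the second-largest $\pi$-values of $\FS(A)$ and $\FS(A')+h$ (using $Z_A=Z_{A'}+\pi(h)$) shows the analogous quantity for $A'_+$ is the same $z_-$. Writing $C\defeq\{b\in A_+:\pi(b)=z_-\}$ and $C'$ analogously, the sub-multiset of $\FS(A)$ with $\pi$-value $Z_A-z_-$ equals $\{\sum A_+\}+(-C)+\FS(A_0)$, and the matching computation on the right together with $\sum A_+=\sum A'_++h$ and one more appeal to \cref{lem:add-subset-sums} forces $C=C'$. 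Finally, setting $\tilde A\defeq A\setminus C$ and $\tilde A'\defeq A'\setminus C$, the factorization $\FS(A)=\FS(\tilde A)+\FS(C)$ and its counterpart for $A'$ combine with \cref{lem:add-subset-sums} to convert $\FS(A)=\FS(A')+h$ into $\FS(\tilde A)=\FS(\tilde A')+h$; since $|C|\ge 1$, the inductive hypothesis yields $\tilde A\sim\tilde A'$, and re-appending the common part $C$ gives $A\sim A'$. The main subtlety is that every cancellation step requires \emph{both} the $\FS$-regularity of $G$ (through \cref{lem:shift-regular-iff-regular}, to handle the $\Z$-coordinate-zero layer) and the total order on $\Z$ (to isolate the extremal $\pi$-fibers that live inside a single copy of $G$); \cref{lem:add-subset-sums} is the glue that makes these Minkowski-style cancellations legal.
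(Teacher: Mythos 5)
Your proof is correct, and it takes a genuinely different route from the paper's. The paper fixes $A,A'$ once and for all (after a single WLOG reducing to nonnegative $\Z$-coordinates and $\bar z=0$) and inducts on the layer index $z\ge 0$, proving $A_{\le z}\sim A'_{\le z}$ and $\FS(A_{\le z})=\FS(A'_{\le z})+(\bar g,0)$ simultaneously. The inductive step there reads off the $\pi$-value-$z$ layer of $\FS(A)$ via the identity $\FS(A)_{=z}=\FS(A_{<z})_{=z}\cup(A_{=z}+\FS(A_{=0}))$, cancels the common pieces, and applies \cref{lem:add-subset-sums} to deduce $A_{=z}=A'_{=z}$ outright. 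You instead induct on $|A|$: you identify the zero layer by looking at the \emph{top} $\pi$-fiber of $\FS(A)$ (rather than the bottom, as the paper does), normalize so $A_0=A'_0$, then use the second-highest fiber together with the minimality of $z_-$ to pin down the bottom positive layer $C=C'$, strip it off via the Minkowski factorization $\FS(A)=\FS(A\setminus C)+\FS(C)$ and one more appeal to \cref{lem:add-subset-sums}, and recurse on the smaller instance. The two arguments are mirror images in flavor — bottom-up layer reconstruction versus top-down peeling — and both lean on the same three ingredients (\cref{lem:shift-regular-iff-regular}, \cref{lem:add-subset-sums}, the total order on $\Z$). The paper's version is slightly cleaner in that it never mutates $A$ or $A'$ after the initial normalization, whereas yours repeatedly applies $\sim$-moves and re-enters the induction; on the other hand, inducting on cardinality gives your argument a self-contained inductive statement that doesn't need to carry the auxiliary condition $\FS(A_{\le z})=\FS(A'_{\le z})+(\bar g,0)$ along. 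One minor point worth making explicit in a polished write-up: the case $A_+=\varnothing$, $A'_+\ne\varnothing$ (or vice versa) is impossible because comparing the ranges of $\pi$-values on $\FS(A)$ and $\FS(A')+h$ forces $\pi(h)=0$ and $Z_A=Z_{A'}$; you handle only the symmetric case $A_+=A'_+=\varnothing$ and implicitly assume the top fibers line up afterward, which is fine but deserves a sentence.
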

\begin{proof}
    We begin by setting up some notation.
    For $B \in \M(G\oplus\Z)$ and $z \in \Z$, define 
    \begin{align*}
        B_{< z} &= \{(g, z') \in B : z' < z\}, \\
        B_{\le z} &= \{(g, z') \in B : z' \le z\}, \\
        B_{= z} &= \{(g, z') \in B : z' = z\}.
    \end{align*}
    
    Let $A, A'\in\M(G\oplus\Z)$ be two multisets such that $\FS(A)=\FS(A')+(\bar g, \bar z)$ for some $\bar g\in G$ and $\bar z\in\Z$; we want to prove that $A\sim A'$. This claim is equivalent to the $\FS$-regularity of $G$ thanks to \cref{lem:shift-regular-iff-regular}.
    
    Up to changing the signs\footnote{Formally, we are substituting $A$ and $A'$ with $\tilde A\defeq (A\setminus A_{<0}) \cup (-A_{<0})$ and $\tilde A'\defeq (A'\setminus A'_{<0}) \cup (-A'_{<0})$. Notice that $A\sim \tilde A$ and $A'\sim\tilde A'$.} of $A_{<0}$ and $A'_{<0}$, we may assume that $A_{<0}=\varnothing$ and $A'_{<0}=\varnothing$.
    We will use repeatedly, without explicitly mentioning it, that the first coordinate of the elements of $A$ and $A'$ is nonnegative. 
    
    Recall that, by assumption, $\FS(A)=\FS(A')+(\bar g, \bar z)$. Since $(0_G, 0)$ belongs to both $\FS(A)$ and $\FS(A')$ (and the first coordinate of all the elements of both multisets is nonnegative), it must be $\bar z=0$. So, it holds $\FS(A)=\FS(A') + (\bar g, 0)$.
    
    We prove, by induction on $z$, that $A_{\le z}\sim A'_{\le z}$ and $\FS(A_{\le z}) = \FS(A'_{\le z}) + (\bar g, 0)$. One can deduce $A\sim A'$ by taking $z$ sufficiently large.
    
    Notice that
    \begin{equation*}
        \FS(A_{=0}) = \FS(A)_{=0} = \FS(A')_{=0} + (\bar g, 0).
    \end{equation*}
    By taking the projection on $G$ of both sides of the latter identity, since $G$ is $\FS$-regular, we can apply \cref{lem:shift-regular-iff-regular} and get $A_{=0}\sim A'_{=0}$. This concludes the first step of the induction, that is $z=0$ (since $A_{=0}=A_{\le 0}$ and $A'_{=0}=A'_{\le 0}$).
    
    For $z\ge 1$, we show that $A_{=z} = A'_{=z}$ which immediately implies, thanks to the inductive assumption, that $A_{\le z}\sim A'_{\le z}$ and $\FS(A_{\le z}) = \FS(A'_{\le z}) + (\bar g, 0)$.
    
    Given a multiset $B\in\M(G\oplus \Z)$ such that $B_{<0}=\varnothing$ (later on $B$ will be a subset of $A$ or $A'$), if $\sum B = (g, z)$ for some $g\in G$ and $z\ge 1$ then either $B= B_{<z}$ or $B = B_{=z}\cup B_{=0}$ and $B_{=z}$ is a singleton.
    Hence, one has
    \begin{align*}
        \FS(A)_{=z} = \FS(A_{<z})_{=z} \cup (A_{=z} + \FS(A_{=0})), \\
        \FS(A')_{=z} = \FS(A'_{<z})_{=z} \cup (A'_{=z} + \FS(A'_{=0})),
    \end{align*}
    and therefore, recalling that $\FS(A)=\FS(A')+(\bar g, 0)$, we get
    \begin{equation}\label{eq:loco23}\begin{aligned}
        \FS(A_{<z})_{=z} &\cup (A_{=z} + \FS(A_{=0})) = \FS(A)_{=z} = \FS(A')_{=z} + (\bar g, 0) \\ 
        &=
        (\FS(A'_{<z})_{=z}+(\bar g, 0)) \cup(A'_{=z} + \FS(A'_{=0}) + (\bar g, 0)) .
    \end{aligned}\end{equation}
    By inductive assumption, $\FS(A_{=0})=\FS(A'_{=0})+(\bar g, 0)$ and $\FS(A_{<z}) = \FS(A'_{<z}) + (\bar g, 0)$; hence \cref{eq:loco23} implies
    \begin{equation*}
        A_{=z} + \FS(A_{=0}) = A'_{=z} + \FS(A_{=0})
    \end{equation*}
    and we deduce $A_{=z}=A'_{=z}$ thanks to \cref{lem:add-subset-sums} (since $G$ is $\FS$-regular it cannot have elements of order $2$, see \cref{prop:fs-regular-implies-ofs}).
\end{proof}

\section{Proof of the Main Theorem}\label{sec:proof-main-thm}
The proof of the main theorem of this paper is routine work now that we have established \cref{prop:fs-regular-implies-ofs,prop:ofs-implies-fs-regular,prop:ofs-implies-fs-regular-for-products,prop:multiply-by-Z}.

\begin{proof}[Proof of \cref{thm:main}]
    If there is a torsion element $g\in G$ such that $\ord(g)\not\in O_{\FS}$, then $\cyclic{\ord(g)}$ is a subgroup of $G$. Thanks to \cref{prop:fs-regular-implies-ofs}, we know that $\cyclic{\ord(g)}$ is not $\FS$-regular and therefore also $G$ is not $\FS$-regular.
    
    We prove the converse implication in three steps: first for groups with structure $(\Z\oplus\Z/n\Z)^d$, then for finitely generated groups, and finally for any group.
    
    Let us assume that $G$ is an abelian group such that $\ord(g)\in O_{\FS}$ whenever $g\in G$ has finite order.
    
    \vspace{0.5em}\noindent
    {\textbf{Step 1: $G=(\Z\oplus \cyclic{n})^d$.}} The assumption on the order of the elements of $G$ guarantees that $n\in O_{\FS}$. Hence, \cref{prop:ofs-implies-fs-regular-for-products} shows that $(\cyclic{n})^d$ is $\FS$-regular. 
    Thanks to \cref{prop:multiply-by-Z}, we obtain that also $(\cyclic{n})^d\oplus \Z^d$ is $\FS$-regular.
    
    \vspace{0.5em}\noindent
    {\textbf{Step 2: $G$ is finitely generated.}} Let $n$ be the maximum order of an element in $G$ with finite order. By assumption $n\in O_{\FS}$. The classification of finitely generated abelian groups (see \cref{subsec:abelian-groups}) guarantees that $G$ is a subgroup of $(\Z\oplus\cyclic{n})^d$ for some $d \ge 1$. By the previous step, we know that $(\Z\oplus\cyclic{n})^d$ if $\FS$-regular and thus also $G$ is $\FS$-regular (being a subgroup of an $\FS$-regular group).
    
    \vspace{0.5em}\noindent
    {\textbf{Step 3: No restrictions on $G$.}} Let $A, A'\in \M(G)$ be two multisets such that $\FS(A)=\FS(A')$; we want to prove that $A\sim_0 A'$. Let $\tilde G\defeq \langle A\cup A'\rangle$ be the group generated by the elements of $A$ and $A'$. The condition on the orders is inherited by $\tilde G$ and, since $\tilde G$ is finitely generated, the previous step guarantees that $\tilde G$ is $\FS$-regular; in particular $A\sim_0 A'$ as desired.
\end{proof}

\printbibliography

\end{document}